\DeclareMathSymbol{\mlq}{\mathord}{operators}{``}
\DeclareMathSymbol{\mrq}{\mathord}{operators}{`'}
\newcommand{\N}{\mathbb{N}}
\newcommand{\tuple}[1]{\langle#1\rangle}
\newcommand{\msf}[1]{\mathsf{#1}}
\newcommand{\ra}{\rightarrow}
\newcommand{\lra}{\leftrightarrow}
\newcounter{dummy} \numberwithin{dummy}{section}
\newtheorem{Theorem}[dummy]{Theorem}
\newtheorem{Definition}[dummy]{Definition}
\newtheorem{Lemma}[dummy]{Lemma}
\newtheorem{Corollary}[dummy]{Corollary}
\setlist{noitemsep, topsep=5pt}
\title{Set-Theoretic Hypodoxes and co-Russell's Paradox}
\author{Timotej Šujan\\\\Charles University, Prague}
\date{}
\begin{document}

\maketitle
\begin{abstract}
    In this paper, we argue that while the concept of a set-theoretic paradox (or paradoxical set) can be relatively well-defined within a formal setting, the concept of a set-theoretic hypodox (or hypodoxical set) remains significantly less clear—especially if the self-membership assertion of the co-Russell set, $\{x:x\in x\}$, is classified as hypodoxical, whereas other set-theoretic sentences with no apparent connection to paradoxes are not. Furthermore, we demonstrate in detail how a contradiction can be derived in Na\"{\i}ve Set Theory by exploiting the unique properties of the co-Russell set, relying on the Fixed Point Theorem of Na\"{\i}ve Set Theory. This result suggests that the boundary between paradoxes and hypodoxes may not be as clear-cut as one might assume.
\end{abstract}
\begin{spacing}{0.5}
    {\tableofcontents}
\end{spacing}

\paragraph{Acknowledgements} This work was supported by grant SVV 260677/2024. The author also expresses gratitude to the \emph{Mathematics Stack Exchange} community for their helpful responses.

\section{Introduction}
In formal logic, we have had considerable success with paradoxical and self-referential constructs when used as tools. Famous examples include the results in \citep{godel}, \citep{lob} and \citep{solovay}. In this paper, we explore a particular kind of self-referential object in the hope that it can be fruitfully used as a tool in a formal context.

In recent years, several philosophers have investigated self-referential constructs that are considered duals of paradoxes (paradoxical entities), referring to them as hypodoxes\footnote{The term "hypodox" was likely coined in \citep{smith2007}.} (hypodoxical entities). These hypodoxical entities often\footnote{See \cite[p. 2465]{smith2022} for an argument that sometimes simply negating key properties is not enough, using the \textit{Knower paradox} as an example.} arise by negating key properties of paradoxical ones. The most frequently cited\footnote{See, e.g., \cite[p. 298]{mackie}, \citep{billon}, or \citep{smith2023}.} examples of hypodoxical entities are the co-Russell set $\{x:x\in x\}$ and the Truth-teller sentence, "This sentence is true."

Does the co-Russell set belong to itself? Is the Truth-teller sentence true? Hypodox researchers argue that these questions can be answered either way and that it is unclear why a positive answer should be preferred over a negative one, or vice versa.

Current research on the concept of a hypodox is still in the process of establishing a suitable definition. A recent definition is as follows:

\begin{displayquote}
    Something, X, is a \textit{hypodox} iff it is underdetermined by granted principles, in given circumstances in some cases, and these circumstances
(if any) and principles are consistent with adding a principle so that it
would result that X is the case and consistent with instead adding another principle so that it would result that X is not the case; but no such
principle has been granted, usually because it has not been justified. \cite[p. 6]{smith2024}
\end{displayquote}
This definition is rather vague, but it is evident that the concept of hypodox is closely tied to that of independence. However, under this definition, it remains unclear why any formal sentence that is independent in some formal system would not qualify as a~hypodox. This raises the concern that the concept of a~hypodox might be redundant, at least in formal contexts.

On the other hand, some statements by hypodox researchers suggest that hypodoxical sentences exhibit, in some sense, a stronger or more profound form of independence than other set-theoretic sentences. For instance, \cite[p. 2]{billon} cautiously remarks about the co-Russell set: “\dots it is hard to see what could make it the case that it is, or that it is not, itself self-membered\dots.” Similarly, \cite[p. 1]{smith2023} states explicitly: “\dots[the co-Russell set] is a member of itself xor it is not, but no principle of classical logic or naive set theory determines which.”

If this were indeed the case—if hypodoxes were in some sense more independent than other formal sentences—then the concept of hypodox would gain a unique significance in formal contexts.

In this paper, we argue that while the concept of a set-theoretic paradox (or paradoxical set) can be relatively well-defined in a formal setting, the concept of a~set-theoretic hypodox (or hypodoxical set) remains unclear—particularly if the self-membership assertion of the co-Russell set is classified as hypodoxical, whereas other set-theoretic sentences with no evident connection to paradoxes are not.

We focus on the co-Russell set—the primary example of a hypodoxical set\footnote{To our surprise, we found no mention in the literature of duals of other paradoxical sets, such as the set of all well-founded sets or the set of all ordinals. We suspect this is because it is immediately clear that the self-membership of the duals of these paradoxical sets—namely, the set of all non-well-founded sets and the set of all non-ordinals—is not independent even under very weak assumptions.}—and examine its self-membership assertion. Our investigation aims to determine whether any unique property tied to its independence distinguishes this assertion from other set-theoretic statements. In light of L\"{o}b’s results \citep{lob}, we remain cautious about prematurely judging the independence of self-referential sentences.

Finally, we demonstrate how a contradiction can be derived in Na\"{\i}ve Set Theory using the unique properties of the co-Russell set, relying on the Fixed Point Theorem of Na\"{\i}ve Set Theory.

\section{Framework and set-theoretic paradoxes}

Let us define \textit{basic set theory} ($\msf{BST}$) as classical predicate logic with equality, along with the axiom of extensionality ($\msf{exten}$):
\[
\begin{aligned} 
    \text{axioms of equality:}&\quad\forall a (a = a) \quad \mid \quad \forall a \forall b ((a = b) \rightarrow (\varphi(a) \rightarrow \varphi(b)))  \\
    \msf{exten}{\textup{:}}&\quad\forall y \forall z (\forall x (x \in y \leftrightarrow x \in z) \rightarrow y = z)  
\end{aligned}
\]

We propose the following definition of a set-theoretic paradox: a set-theoretic paradox is a proof of a contradiction in a seemingly consistent extension of $\msf{BST}$. By a \textit{seemingly consistent} system, we mean a system that was strongly believed to be consistent but was later shown to be inconsistent. Determining what qualifies as a seemingly consistent system is, of course, a difficult philosophical question. Nevertheless, we have at least one\footnote{As we will show later, there is a sense in which considering only $\msf{NST}$ as a seemingly consistent system suffices, since virtually any other set theory extending $\msf{BST}$ can be viewed as a fragment of $\msf{NST}$. Consequently, any paradox arising in a seemingly consistent system can also be reconstructed in the corresponding fragment of $\msf{NST}$ by first deriving the relevant axioms within it and then deriving the contradiction.} prominent example: Na\"{\i}ve Set Theory ($\msf{NST}$), which was widely considered consistent but ultimately proved otherwise.

Let us formalize $\msf{NST}$ as $\msf{BST}$ with the following axiom schema of unrestricted comprehension ($\msf{UC}$):
\[
    \msf{UC}{\textup{:}}\quad\exists y\forall x(x\in y \leftrightarrow \varphi(x)),
\]
where $y$ is not free in $\varphi$.

We call an $\msf{NST}$-fragment any extension of $\msf{BST}$ by a set $\Gamma$ of instances of $\msf{UC}$, and we refer to such an extension as $\msf{BST}{+}\Gamma$. We adopt similar notation for extension of $\msf{BST}$ by other sentences (not necessarily $\msf{UC}$-instances), such as $\alpha$ and $\beta$, writing such extension as $\msf{BST}{+}\alpha{+}\beta$ or $\msf{BST}{+}\{\alpha,\beta\}$.

We use a set-builder notation, e.g. $\{x:\varphi(x)\}$, only informally or as an abbreviation for instances of $\msf{UC}$, such as $\forall x(x\in y \lra \varphi(x))$. This usage is purely for convenience, and we do not intend to suggest that set-builder notation is part of the formal language employed.

For any given $\msf{UC}$-instance, we call the set whose existence is asserted through the variable bound by the first existential quantifier (in the schema above, this is the variable $y$) the \textit{main set} of that $\msf{UC}$-instance.

\begin{Definition}[paradoxical sentence/set]
    A $\msf{UC}$-instance $\alpha$ is \emph{paradoxical} if $\mathsf{BST}{+}\alpha$ is a trivial theory \textup{(}i.e., a theory in which every sentence is provable\textup{)}. A~set $A$ is \emph{paradoxical} if it is the \textit{main set} of a paradoxical $\msf{UC}$-instance.
\end{Definition}

\begin{Theorem}\label{thmpara}
    The following sets are paradoxical\textup{:}
    \begin{itemize}
        \item[\textup{(a)}] Russell set $\{x:x\notin x\}$
        \item[\textup{(b)}] Curry-like sets $\{x:x\in x\rightarrow \varphi\}$ \textup{(}if $\varphi$ cannot hold in a given context\textup{)}
    \end{itemize}
\end{Theorem}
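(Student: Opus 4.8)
The plan is to unfold the definition of \emph{paradoxical}. Since $\msf{BST}$ is built on classical predicate logic, a theory $\msf{BST}{+}\alpha$ is trivial precisely when it proves some sentence together with its negation, by \emph{ex falso quodlibet}; so in each case it suffices to exhibit such a contradiction. The common recipe is: treat the relevant $\msf{UC}$-instance as an axiom, apply existential instantiation to fix a witness for the asserted set, specialize the resulting universal biconditional to that witness, and extract the contradiction. (Note that only the logical apparatus and the single instance are used; $\msf{exten}$ plays no role.)

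For (a), let $\alpha_R$ be the instance $\exists y\,\forall x(x\in y\lra x\notin x)$, whose main set is the Russell set. First I would fix a witness $r$ with $\forall x(x\in r\lra x\notin r)$, then specialize the bound variable to $r$ itself, obtaining $r\in r\lra r\notin r$. In classical logic a biconditional between a formula and its own negation is refutable: case-splitting on $r\in r$ forces, on either side, both $r\in r$ and $r\notin r$. Hence $\msf{BST}{+}\alpha_R$ proves a contradiction and is trivial. This is just the textbook Russell argument and presents no real obstacle.

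For (b), let $\alpha_C$ be the instance $\exists y\,\forall x(x\in y\lra(x\in x\ra\varphi))$. Fixing a witness $c$ and specializing to $x:=c$ gives $c\in c\lra(c\in c\ra\varphi)$. Writing $P$ for $c\in c$, I then have $P\lra(P\ra\varphi)$. The forward direction yields $P\ra(P\ra\varphi)$, from which classical contraction gives $P\ra\varphi$; feeding this into the backward direction $(P\ra\varphi)\ra P$ gives $P$, and modus ponens then delivers $\varphi$. Thus $\msf{BST}{+}\alpha_C\vdash\varphi$. The side condition that $\varphi$ ``cannot hold in the given context'' is exactly what upgrades this to triviality: reading it as $\msf{BST}\vdash\neg\varphi$ (in particular when $\varphi$ is a fixed absurdity), we obtain $\varphi$ and $\neg\varphi$ together, hence a contradiction.

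The step I expect to need the most care is the contraction move $P\ra(P\ra\varphi)\Rightarrow P\ra\varphi$ in (b): this is exactly the inference blocked in the contraction-free substructural logics where Curry's paradox is defused, so I would flag that we rely essentially on the ambient classical logic of $\msf{BST}$. I would also pin down what ``cannot hold'' is meant to assert, since the triviality conclusion depends on $\varphi$ being \emph{refutable} in the background context, not merely underivable.
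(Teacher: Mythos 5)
Your proposal is correct, and part (a) matches the paper's proof exactly. Part (b) reaches the same intermediate conclusion---that $\msf{BST}{+}\alpha_C$ proves $\varphi$ outright---but by a genuinely different route. You run the standard Curry derivation: from $P\lra(P\ra\varphi)$ you extract $P\ra\varphi$ by contraction, feed it back to get $P$, and conclude $\varphi$ by modus ponens; extensionality is never used, and you correctly flag contraction as the load-bearing (and substructurally contentious) step. The paper instead argues by a classical case split on $\varphi$: if $\neg\varphi$ held, then $x\in x\ra\varphi$ would reduce to $x\notin x$, so $C$ would coincide with the Russell set (the paper invokes $\msf{exten}$ here, though strictly one only needs $C$ to \emph{satisfy} the Russell property and then specialize to $x:=C$); hence $\varphi$ must hold and $C$ equals the universal set. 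The paper then discharges the side condition by exhibiting one concrete refutable $\varphi$ (the nonexistence of a universal set), whereas you state the general reading---$\varphi$ refutable in the background context---which is the cleaner formulation and is exactly what the theorem's parenthetical is meant to capture. Your version buys independence from $\msf{exten}$ and makes the reliance on contraction explicit; the paper's version buys a conceptual reduction of the Curry-like sets to the Russell set. Both are sound, and nothing is missing from your argument.
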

\begin{proof}
    (a) Let $R=\{x:\notin x\}$. The derivation is straightforward. Assuming $R\in R$ leads to $R\notin R$, and vice versa, yielding a contradiction.

    (b) Let $C=\{x:x\in x\ra \varphi\}$. Over $\msf{BST}$, $x\in x\rightarrow \varphi$ is equivalent to $x\notin x \lor \varphi$. Suppose $\neg\varphi$ holds. By extensionality, $C$ would then equal the Russell set $R$, leading to a contradiction. Hence, $\varphi$ must hold, making $C$ equal to the universal set $U=\{x:x{=}x\}$. Now, consider $\varphi:=\neg\exists z(z\in y \leftrightarrow z{=}z)$, which asserts that a universal set does not exist. This creates a contradiction: $U$ cannot exist, but $C=U$.
\end{proof}

We now present a somewhat counterintuitive result: the Mirimanoff set of all well-founded sets is not paradoxical under our definition.

A set $x$ is called \emph{transitive} if every element of $x$ is also a subset of $x$. Formally,
\[
    \msf{transitive}(x):= \forall y\forall z(((y\in z)\land(z\in x))\ra y\in x).
\]

A set $x$ is said to be \emph{well-ordered} if its elements are \emph{irreflexive}, \emph{transitive}, \emph{linear}, and \emph{well-founded} with respect to the relation $\in$. Formally,
\begin{align*}
    \msf{well{\text{-}}ordered}(x) := 
    &\quad\forall a(a\in x \ra a\notin a)\quad\land\\ 
    &\quad\forall a\forall b \forall c ((a\in x \land b\in x \land c\in x)\ra((a\in b \land b\in c)\ra a \in c))\quad\land\\
    &\quad\forall a \forall b((a\in x \land b \in x)\ra(a\neq b\ra (a\in b \lor b\in a)))\quad\land\\
    &\quad \forall s(s\subseteq x\ra(s{\neq}\emptyset\ra \exists m\forall z((m\in s \land z \in s)\ra z\notin m))).
\end{align*}

A set $x$ is called \emph{well-founded} if there exists a transitive, well-ordered set $y$ such that $x \subseteq y$. Formally,
\[
    \msf{well{\text{-}}founded}(x) := \exists y(\msf{transitive}(y)\land \msf{well{\text{-}}ordered}(y) \land x\subseteq y).
\]

\begin{Theorem}
    The Mirimanoff set, $\{x:\msf{well{\text{-}}founded}(x)\}$, is \textbf{not} paradoxical.
\end{Theorem}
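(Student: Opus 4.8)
The plan is to unpack the definition of ``not paradoxical'': writing $\alpha$ for the $\msf{UC}$-instance $\exists y\,\forall x(x\in y \lra \msf{well{\text{-}}founded}(x))$ whose main set is the Mirimanoff set $W$, I must show that $\msf{BST}{+}\alpha$ is not trivial. In classical logic a theory is trivial iff it is inconsistent, so it suffices to prove $\msf{BST}{+}\alpha$ consistent, and for this I will exhibit a model; soundness then yields non-triviality. Before building it, I would record the conceptual reason no paradox looms, in contrast with the Russell set: in $\msf{BST}{+}\alpha$ one can already prove $W\notin W$. Indeed, if $W$ were well-founded then $W\in W$, and also $W\subseteq y$ for some transitive, well-ordered $y$; but the elements of a well-ordered set are irreflexive, so from $W\in W\subseteq y$ we get $W\in y$ and hence $W\notin W$, a contradiction. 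Thus $\neg\msf{well{\text{-}}founded}(W)$ and $W\notin W$ are forced, and --- crucially --- nothing forces the opposite, so there is room for a model.

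For the model I would take the hereditarily finite sets with a single fresh top point: let the domain be $D = V_\omega \cup \{W\}$ with $W\notin V_\omega$, interpret $\in$ by genuine membership on $V_\omega$, declare $W\notin d$ for every $d\in D$ (in particular $W\notin W$), and declare $a\in W$ exactly for those $a\in V_\omega$ that are sets of ordinals (finite sets of natural numbers). The guiding fact is that a transitive set well-ordered by $\in$ is precisely an ordinal, so inside this structure $\msf{well{\text{-}}founded}(x)$ should end up meaning ``$x$ is a subset of some ordinal'', i.e.\ ``$x$ is a set of ordinals'' --- exactly how the extension of $W$ was defined.

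The verification then splits into three checks. First, extensionality: since $W$ belongs to nothing, every old set keeps its genuine extension, and $W$'s extension (all finite sets of ordinals) has no rank bound below $\omega$, so it differs from the extension of every element of $V_\omega$; hence all extensions are distinct. Second, for old $a\in V_\omega$ I would show $D\models\msf{well{\text{-}}founded}(a)$ iff $a$ is a set of ordinals, matching the definition of $a\in W$. Third, I would show $D\not\models\msf{well{\text{-}}founded}(W)$: the point $W$ cannot serve as the witness $y$, because its ``elements'' include $\in$-incomparable sets such as $\{1\}$ and $\{2\}$, so $W$ fails linearity and is not well-ordered; and no genuine finite set contains all finite sets of ordinals, so no old $y$ has $W\subseteq y$ either. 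Together these give $\{x\in D : x\in W\} = \{x\in D : D\models\msf{well{\text{-}}founded}(x)\}$, whence $D\models\alpha$.

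The main obstacle is the apparent circularity and the attendant absoluteness argument in the second and third checks: the predicate $\msf{well{\text{-}}founded}(x)$ quantifies over all of $D$, including the new point $W$, so I must confirm that $W$ never sneaks in as the value of a bound variable in a way that corrupts the intended meaning. Concretely, in evaluating $\msf{transitive}(y)$ and $\msf{well{\text{-}}ordered}(y)$ for an old $y$, every relevant witness --- the $a,b,c$ ranging over members of $y$, the subset $s$, and the minimal element $m$ --- is forced to lie in $V_\omega$, because $W$ is a member of nothing and is not a subset of any old set. Hence these subformulas are absolute between $D$ and the genuine hereditarily finite universe, and $\msf{transitive}(y)\wedge\msf{well{\text{-}}ordered}(y)$ holds in $D$ exactly when $y$ is an ordinal. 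Once this absoluteness is nailed down, the equivalence ``well-founded iff set of ordinals'' for old elements, together with the failure of well-foundedness for $W$ itself, closes the argument.
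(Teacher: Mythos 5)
Your proposal is correct and follows the same overall strategy as the paper: unpack ``not paradoxical'' as non-triviality of $\msf{BST}$ plus the relevant $\msf{UC}$-instance, and establish consistency by exhibiting a model. The difference lies entirely in the choice of model. The paper uses a five-element structure, $\emptyset$, $\{\emptyset\}$, $\{\emptyset,\{\emptyset\}\}$, $\{\{\emptyset\}\}$, together with $M$ itself, and the whole verification reduces to checking these few sets by hand; the decisive point is the same one you isolate, namely that $M$ fails linearity (witnessed there by $\emptyset$ and $\{\{\emptyset\}\}$, in your model by $\{1\}$ and $\{2\}$) and admits no other candidate for a transitive well-ordered superset. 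Your model $V_\omega\cup\{W\}$ is heavier: it forces you to prove an absoluteness claim for the bounded quantifiers in $\msf{transitive}$ and $\msf{well{\text{-}}ordered}$ and to characterize the well-founded elements as exactly the finite sets of von Neumann naturals, work the paper's finite model avoids entirely. In exchange you get two things the paper does not state: a clean internal characterization of $\msf{well{\text{-}}founded}$ in the model, and---more interestingly---the observation that $W\notin W$ is already a \emph{theorem} of $\msf{BST}$ plus the comprehension instance (via irreflexivity of elements of the witnessing well-ordered set), whereas the paper only observes $M\notin M$ inside its particular model. That derivation is sound and sharpens the picture, since it shows the model has no freedom on the self-membership question. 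One small presentational point: in your second check you should say explicitly that $W$ cannot serve as the witness $y$ for the well-foundedness of an \emph{old} set either; the reason is the same linearity failure you invoke in the third check, so nothing is missing, but as written the exclusion is only argued for $W$'s own well-foundedness.
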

\begin{proof}
    Let $M=\{x:\msf{well{\text{-}}founded}(x)\}$. Consider a model consisting of the sets $\emptyset$, $\{\emptyset\}$, $\{\emptyset,\{\emptyset\}\}$ and $\{\{\emptyset\}\}$. We expand this model by including $M$. 
    
    Observe that $\emptyset$, $\{\emptyset\}$ and $\{\emptyset,\{\emptyset\}\}$ are transitive and well-ordered sets. Hence, by definition, they are well-founded and belong to $M$.
    Additionally, $\{\{\emptyset\}\}$ is also well-founded, as its transitive closure, $\{\emptyset,\{\emptyset\}\}$, is well-ordered. Thus, $\{\{\emptyset\}\}\in M$.

    Now, consider whether $M$ is itself well-founded. This would require 
    $M$ to have a~transitive closure that is well-ordered by $\in$. The only candidate for such a closure is $M$ itself. However, $M$ is not well-ordered by $\in$, as $\emptyset\notin \{\{\emptyset\}\}$ and $\{\{\emptyset\}\}\notin\emptyset$, which violates linearity.
    Thus, $\msf{well{\text{-}}founded}(M)$ cannot hold, which implies $M\notin M$ in the given model.

    Since $\msf{BST}{+}\text{"there exists the Mirimanoff set"}$ has a model, it is a non-trivial theory. Therefore, the Mirimanoff set is not paradoxical.
\end{proof}

The contradiction in the previous proof did not arise merely because, in 
$\msf{BST}$-models, a transitive closure need not exist for every set.  Informally, within such models, where transitive closures are not guaranteed, not every set can be "truly" recognized as well-founded.

To capture the paradoxicality of the Mirimanoff set, we now extend the definition of a paradoxical $\msf{UC}$-instance to include cases where a contradiction arises only by assuming multiple $\msf{UC}$-instances.

\begin{Definition}
    The collection of $\msf{UC}$-instances $\Gamma$ is called a \textit{paradoxical group} if $\mathsf{BST}{+}\Gamma$ is a trivial theory, but for every proper subset $\Delta\subset \Gamma$, $\mathsf{BST}{+}\Delta$ is a~non-trivial theory.
\end{Definition}

Thus, while the Mirimanoff set may not be paradoxical on its own, it certainly belongs to a paradoxical group.

\begin{Theorem}
    For every natural number $n{>}0$, there exists a paradoxical group of size $n$.
\end{Theorem}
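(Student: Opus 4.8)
The plan is to handle $n=1$ by itself and, for every $n\ge 2$, to build a minimal inconsistent family by \emph{diluting} the Russell instance: one instance triggers the Russell contradiction, but only on the condition that all of the other $n-1$ instances are present. Since non-triviality is inherited by subtheories (removing axioms cannot create a proof of a contradiction), it suffices to show that the full family $\Gamma$ is trivial while each of its $n$ maximal proper subsets—the co-singletons $\Gamma\setminus\{\phi_k\}$—is non-trivial; every smaller subset is then automatically non-trivial. For $n=1$ take $\Gamma=\{\rho\}$ with $\rho$ the Russell instance $\exists y\forall x(x\in y\lra x\notin x)$, which is paradoxical by Theorem~\ref{thmpara}; its only proper subset is empty, leaving the theory $\msf{BST}$ itself, which is non-trivial.

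For $n\ge 2$, I would fix $n-1$ ``flag'' instances $\phi_1,\dots,\phi_{n-1}$, each a $\msf{UC}$-instance $\exists y\forall x(x\in y\lra\theta_i(x))$ with bound variables renamed apart, together with a single ``trigger'' instance
\[
\tau:=\exists y\forall x\big(x\in y\lra(x\notin x\ \land\ \phi_1\land\cdots\land\phi_{n-1})\big),
\]
where the conjunction of the \emph{sentences} $\phi_i$ is written out literally inside the defining formula and $y$ is chosen fresh. Put $\Gamma=\{\phi_1,\dots,\phi_{n-1},\tau\}$. Triviality of $\msf{BST}{+}\Gamma$ is then immediate: each $\phi_i$ is an axiom, hence provable, so $\phi_1\land\cdots\land\phi_{n-1}$ holds; then $\tau$ furnishes a set $y$ with $\forall x(x\in y\lra x\notin x)$, and instantiating $x:=y$ reproduces Russell's contradiction.

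It remains to make every co-singleton non-trivial by exhibiting a model. Deleting $\tau$ leaves $\{\phi_1,\dots,\phi_{n-1}\}$, so I only need one model in which all flags hold. Deleting a flag $\phi_k$ leaves the other flags together with $\tau$; here I want a model in which $\phi_k$, read as a sentence, is \emph{false}, so that the conjunction inside $\tau$ fails and the main set of $\tau$ collapses to $\emptyset$, while every other flag holds and an empty set is present to witness $\tau$. Both tasks reduce to one requirement: the flags must be pairwise independent over $\msf{BST}$, in that for each $k$ there is a $\msf{BST}$-model realizing exactly the configuration ``$\phi_k$ false, all other $\phi_i$ true, $\emptyset$ present.'' I would realize the flags concretely as the assertions that the set $\{x:x\text{ lies on a pure }\in\text{-cycle of length }\ell\}$ exists, using a distinct length $\ell\in\{2,\dots,n\}$ for each flag, and build the witnessing models as finite disjoint unions of such cycles together with the relevant ``collecting'' sets and an empty set; each configuration is then read off directly, since $\msf{BST}$ proves no comprehension and so lets us freely include or omit any particular collecting set.

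The main obstacle is precisely this independence step, which is more delicate than it looks. Because $\msf{BST}$ has nothing beyond extensionality, the freedom to \emph{omit} sets is exactly what makes independence possible, but one must verify that no element of the hand-built model accidentally witnesses the flag we intend to falsify. In particular a flag cannot be falsified by emptying its defining collection—an empty set would then witness it—so the collection must be kept nonempty yet left uncollected; this is why I restrict to cycles of length $\ge 2$, so that the singletons lying on a cycle never themselves have the whole cycle as their extension, unlike the degenerate length-one case of a Quine atom, which is self-collecting. Checking extensionality and that each formula $\theta_i$ picks out exactly the intended elements in each of the finitely many models is routine, but it is where all the real care lies.
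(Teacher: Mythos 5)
Your proof is correct and follows the paper's basic strategy---dilute the Russell instance so that it only fires when $n-1$ auxiliary $\msf{UC}$-instances are all present---but the two realizations differ in instructive ways. The paper's auxiliaries assert the existence of the ordinals $0,\dots,n-2$, each conditioned on its predecessors so that the failure of any one makes all later instances and the trigger collapse to the universal set; your auxiliaries are unconditional assertions about $\in$-cycles, and your trigger uses a conjunction rather than an implication, so it collapses to $\emptyset$ instead. The $\emptyset$-collapse is a genuine simplification: each co-singleton $\Gamma\setminus\{\phi_k\}$ is then modelled by $\emptyset$ together with a single uncollected $\ell_k$-cycle, since $\emptyset$ simultaneously witnesses $\tau$ and every flag whose cycle is absent from the model---you do not actually need the ``disjoint unions with collecting sets'' you describe, and $\{\emptyset\}$ alone models $\Gamma\setminus\{\tau\}$. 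You also make explicit the monotonicity reduction to the $n$ co-singletons, which the paper leaves tacit. One detail you must pin down when writing $\theta_\ell$: ``pure $\in$-cycle of length $\ell$'' has to require the $\ell$ elements to be pairwise distinct, since otherwise a $2$-cycle satisfies the length-$4$ formula (and generally an $\ell$-cycle satisfies the formula for every multiple of $\ell$), which would make the collection for a flag you intend to keep true nonempty and hence unwitnessed by $\emptyset$. Your restriction to $\ell\ge 2$ handles the self-collecting Quine-atom problem but not this divisibility trap; with distinctness added, the verification really is routine.
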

\begin{proof}
    If $n=1$, we take some paradoxical $\msf{UC}$-instance. Assume $n{>}1$.

    Let $\alpha_0,\alpha_1, \alpha_2,\dots$ be $\msf{UC}$-instances. Also, let $\alpha_k(x)$ be the formula derived from $\alpha_k$ by removing the initial existential quantifier, replacing variable $x$ with a new variable and replacing variable $y$ with $x$. If $\alpha_k := \exists y \forall x(x\in y \lra \varphi(x))$, then $\alpha_k(x)$ could be, e.g., $\forall z(z\in x\lra \varphi(z))$. Define $\alpha_0,\dots,\alpha_{n{-}2}$ to assert the existence of the ordinals $0,\dots,n{-}2$ as follows: 
    \begin{align*}
        \alpha_0 &:= \exists y\forall x(x\in y \lra x\neq x)\\
        \alpha_1 &:= \exists y\forall x (x\in y \lra(\alpha_0 \ra \alpha_0(x)))\\
        \alpha_2 &:= \exists y \forall x(x\in y \lra((\alpha_0\land\alpha_1) \ra(\alpha_0(x)\lor \alpha_1(x))))\\
        &\vdots\\
        \alpha_{n{-}2}&:= \exists y\forall x(x\in y \lra((\alpha_0\land\dots\land\alpha_{n{-}3})\ra(\alpha_0(x)\lor\dots\lor\alpha_{n{-}3}(x))))
    \end{align*}
    
    If all ordinals less than $k$ exist, then $\alpha_k$ asserts the existence of the ordinal $k$; otherwise, it asserts the existence of the universal set.

    Now, consider the $\msf{UC}$-instance $\beta$:
    \[
    \beta := \exists y\forall x(x\in y \lra ((\alpha_0\land\dots\land\alpha_{n{-}2})\ra x\notin x))
    \]
    If $\alpha_k$ holds for all $k$ such that $0\leq k\leq n{-}2$, then $\beta$ asserts the existence of the Russell set, yielding a contradiction. However, if some $\alpha_k$ does not hold, all $\alpha_l$ for $l>k$ and $\beta$ merely assert the existence of the universal set, which is unproblematic in this context. Thus, the collection $\{\alpha_0, \dots, \alpha_{n-2}, \beta\}$ forms a~paradoxical group of size $n$.
\end{proof}

\begin{Theorem}
    There does not exist a paradoxical group of infinite size.
\end{Theorem}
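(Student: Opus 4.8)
The plan is to exploit the finiteness of first-order proofs (syntactic compactness). The key observation is that ``$\msf{BST}{+}\Gamma$ is trivial'' means it proves every sentence, and in particular it proves some fixed contradiction, say $\exists x(x\neq x)$. A proof of this contradiction is, by definition, a finite syntactic object, so it can cite only finitely many of the premises available to it.

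Suppose, for contradiction, that $\Gamma$ is an infinite paradoxical group. Since $\msf{BST}{+}\Gamma$ is trivial, fix a derivation $D$ of $\exists x(x\neq x)$ (equivalently, of some contradiction) from $\msf{BST}$ together with premises drawn from $\Gamma$. As $D$ is finite, only finitely many instances from $\Gamma$ actually occur in $D$; collect them into a finite set $\Delta_0\subseteq\Gamma$. The very same derivation $D$ is then a derivation from $\msf{BST}{+}\Delta_0$, so $\msf{BST}{+}\Delta_0$ is already trivial.

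Now I would invoke the hypothesis that $\Gamma$ is infinite: since $\Delta_0$ is finite, it is a \emph{proper} subset, $\Delta_0\subsetneq\Gamma$. But the definition of a paradoxical group requires $\msf{BST}{+}\Delta$ to be non-trivial for \emph{every} proper subset $\Delta\subset\Gamma$, and in particular for $\Delta_0$. This directly contradicts the triviality of $\msf{BST}{+}\Delta_0$ established in the previous step. Hence no infinite paradoxical group can exist.

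I do not expect any genuine obstacle here; the argument is essentially a one-line appeal to the finite character of proofs. The only point deserving a little care is making triviality concrete: rather than reasoning about ``every sentence is provable'' (which is witnessed by infinitely many proofs), I would pin triviality to a \emph{single} provable contradiction, so that one finite derivation suffices to extract the finite $\Delta_0$. With that phrasing the contradiction with the minimality clause of the definition is immediate.
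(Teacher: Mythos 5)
Your proposal is correct and follows essentially the same route as the paper: both arguments extract a finite subset $\Delta_0\subset\Gamma$ from a single finite derivation of a contradiction and then contradict the minimality clause of the definition of a paradoxical group. Your version is slightly more explicit about pinning triviality to one fixed provable contradiction, but the underlying idea is identical.
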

\begin{proof}
    Assume for contradiction that there exists a paradoxical group of infinite size. Let $\Gamma$ be such a group, where $\mathsf{BST{+}} \Gamma$ is a trivial theory, but for every proper subset $\Delta \subset \Gamma$, $\mathsf{BST{+}} \Delta$ is non-trivial. Any proof of a contradiction in $\mathsf{BST{+}} \Gamma$ depends on only a finite subset of $\Gamma$, say $\Delta_0 \subset \Gamma$. Since $\mathsf{BST{+}}\Delta_0$ is trivial, the full group $\Gamma$ cannot be paradoxical.
\end{proof}

Now we have seen that the concept of a set-theoretic paradox and a paradoxical set can be defined fairly clearly.

\section{Set-theoretic hypodoxes?}

In the previous section, we claimed that a fitting definition of a~set-theoretic paradox is a proof of a contradiction in a~seemingly consistent extension of $\msf{BST}$. We think that a fitting dual concept\footnote{A somewhat weaker dual concept could be a proof of the independence of the sentence $\varphi$ in a~seemingly $\varphi$-complete extension of $\msf{BST}$. By a seemingly $\varphi$-complete system, we mean one in which there was a strong belief that $\varphi$ is not independent, but which ultimately proves $\varphi$.} would be a proof of incompleteness in a seemingly \textit{complete} extension of $\msf{BST}$. By a \textit{seemingly complete} system, we mean one in which there was a strong belief\footnote{Of course, it may be controversial to claim that a given extension of $\msf{BST}$ is or is not seemingly complete. However, it seems less controversial to argue that, prior to Gödel's results, there was a~belief that at least some "nicely" axiomatizable (i.e., recursively axiomatizable) extension of $\msf{BST}$ was both complete and suitable as a foundation for mathematics. Thus, the independence proofs of Gödel's sentences in such a seemingly complete system would fit this definition of the dual concept to a paradox.} that no independent sentences exist, but which ultimately proves to be incomplete.

The problem with this dual definition is that it is not at all clear why the co-Russell set should be connected to it. To establish such a connection, we would need to identify an extension of $\msf{BST}$ in which the self-membership of the co-Russell set is independent, and argue that this extension is seemingly complete.

Another issue is that, even if we succeeded in finding such an extension, it would likely include many other independent sentences that fit the above definition but have never been considered hypodoxical.

Therefore, we take a different approach. Specifically, we will briefly investigate consistent extensions of $\msf{BST}$ to search for a property, tied to independence, that could distinguish the self-membership assertion of co-Russell set from other set-theoretic sentences. If such a property is found, we could use it to characterize set-theoretic hypodoxes and argue that the notion of hypodox is not redundant within the formal context.
As mentioned in the introduction, it appears that hypodox researchers assume such a property: namely, that hypodoxical sentences exhibit, in some sense, a stronger form of independence than other set-theoretic sentences.

First, we will show that there is a sense in which it suffices to investigate only consistent $\msf{NST}$-fragments, since virtually any set theory can be re-axiomatized (in a~very straightforward manner) as a consistent $\msf{NST}$-fragment.

\begin{Theorem}
    Any $\mathsf{BST}$-extension $\msf{BST{+}}\Gamma$ defined by a collection $\Gamma$ of axioms \textup{(}not necessarily $\msf{UC}$-instances\textup{)}, where at least one $\msf{UC}$-instance is provable, is equivalent \textup{(}in terms of provable sentences\textup{)} to some $\msf{NST}$-fragment.
\end{Theorem}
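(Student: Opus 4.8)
The plan is to prove the two theories equivalent by exhibiting, for each axiom $\gamma\in\Gamma$, a single $\msf{UC}$-instance $\delta_\gamma$ that is provably equivalent to $\gamma$ over $\msf{BST}{+}\Gamma$, and then taking $\Delta=\{\delta_\gamma:\gamma\in\Gamma\}$. Since two extensions of $\msf{BST}$ have the same theorems exactly when each proves the other's axioms, it suffices to arrange (i) $\msf{BST}{+}\delta_\gamma\vdash\gamma$ for every $\gamma$, so that $\msf{BST}{+}\Delta$ proves all of $\Gamma$ and hence everything $\msf{BST}{+}\Gamma$ proves; and (ii) $\msf{BST}{+}\Gamma\vdash\delta_\gamma$ for every $\gamma$, so that $\msf{BST}{+}\Gamma$ proves all of $\Delta$ and hence everything $\msf{BST}{+}\Delta$ proves.

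The encoding combines the Russell trick (to force $\gamma$) with the hypothesis (to guarantee existence). Let $\mu:=\exists y\forall x(x\in y\lra\psi(x))$ be a $\msf{UC}$-instance provable in $\msf{BST}{+}\Gamma$, which exists by assumption, with main set $M=\{x:\psi(x)\}$; rename bound variables so that $y$ does not occur in $\psi$. For each $\gamma\in\Gamma$ put
\[
    \delta_\gamma:=\exists y\forall x\bigl(x\in y\lra\bigl((\gamma\land\psi(x))\lor(\neg\gamma\land x\notin x)\bigr)\bigr).
\]
This is a legitimate $\msf{UC}$-instance, since the matrix has only $x$ free and $y$ does not occur in it. For (i), argue inside $\msf{BST}{+}\delta_\gamma$: the witness $y$ satisfies $y\in y\lra((\gamma\land\psi(y))\lor(\neg\gamma\land y\notin y))$, and if $\neg\gamma$ held the right-hand side would collapse to $y\notin y$, giving $y\in y\lra y\notin y$, a contradiction; hence $\gamma$.

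For (ii), work in $\msf{BST}{+}\Gamma$, which proves both $\gamma$ and $\mu$. Under the assumption $\gamma$ the matrix of $\delta_\gamma$ is provably equivalent to $\psi(x)$, so $\msf{BST}\vdash\gamma\ra(\delta_\gamma\lra\mu)$; combined with $\msf{BST}{+}\Gamma\vdash\gamma$ and $\msf{BST}{+}\Gamma\vdash\mu$ this yields $\msf{BST}{+}\Gamma\vdash\delta_\gamma$. Setting $\Delta=\{\delta_\gamma:\gamma\in\Gamma\}$ then gives an $\msf{NST}$-fragment with the same theorems as $\msf{BST}{+}\Gamma$.

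The crux is step (ii): making the encoded instance genuinely provable in $\msf{BST}{+}\Gamma$, not merely consistent with it. The more obvious encoding $\exists y\forall x(x\in y\lra(\neg\gamma\ra x\notin x))$ already forces $\gamma$ by the same Russell argument, but under $\gamma$ it asserts the existence of the \emph{universal} set, which $\msf{BST}{+}\Gamma$ need not prove (a $\msf{BST}$-model containing only $\emptyset$ and $\{\emptyset\}$ has no universal set yet realizes a $\msf{UC}$-instance). This is precisely where the hypothesis that some $\msf{UC}$-instance is provable becomes indispensable: it supplies a concrete, provably existing set $M$ to which $\delta_\gamma$ can collapse when $\gamma$ holds, while the Russell alternative under $\neg\gamma$ keeps $\delta_\gamma$ strong enough to entail $\gamma$. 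The hypothesis is also essentially necessary, since any theory equivalent to an $\msf{NST}$-fragment $\msf{BST}{+}\Delta$ with $\Delta\neq\emptyset$ must itself prove a $\msf{UC}$-instance (namely any member of $\Delta$), so the only theories excluded by it are the trivial ones equivalent to $\msf{BST}$ alone.
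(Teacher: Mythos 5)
Your proof is correct and follows essentially the same route as the paper's: for each axiom $\gamma\in\Gamma$ you form a Curry-like $\msf{UC}$-instance that collapses to the Russell set under $\neg\gamma$ (forcing $\gamma$) and to the provably existing set $\{x:\psi(x)\}$ under $\gamma$ (making the instance provable in $\msf{BST}{+}\Gamma$); your matrix $(\gamma\land\psi(x))\lor(\neg\gamma\land x\notin x)$ is classically equivalent to the paper's $(\gamma\ra\psi(x))\land(\neg\gamma\ra x\notin x)$. You are somewhat more explicit than the paper about the two directions of theory-equivalence and about why the hypothesis of a provable $\msf{UC}$-instance is needed, which is a welcome clarification but not a different argument.
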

\begin{proof}
    Let $\exists y\forall x(x\in y \lra \varphi(x))$ be a provable $\msf{UC}$-instance in $\msf{BST{+}}\Gamma$. For each axiom $\psi\in\Gamma$, consider the following instance of $\msf{UC}$:
    \[
    \exists y\forall x(x\in y \lra ((\psi\ra\varphi(x))\land (\neg\psi\ra x\notin x)))
    \]
    This instance defines a more elaborate version of a Curry-like set. If $\neg\psi$ holds, the instance would yield a set equivalent to the Russell set, leading to a contradiction. Hence, $\neg\psi$ cannot hold. If $\psi$ holds, the instance simplifies to the already provable $\exists y\forall x(x\in y \lra \varphi(x))$. Thus, each such instance forces $\psi$ to hold while remaining equivalent to a set defined by $\varphi$.
\end{proof}

Now, we are ready to investigate the co-Russell set.

First, let us demonstrate that there exist extensions of $\msf{BST}$ in which the self-membership of the co-Russell set is independent.

\begin{Theorem}
    The self-membership of the co-Russell set is independent in $\msf{BST}$.
\end{Theorem}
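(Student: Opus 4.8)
The plan is to establish the independence semantically, by exhibiting two models of $\msf{BST}$ that disagree on the self-membership of the co-Russell set. First I would fix a formalization of the assertion, since $\msf{BST}$ does not prove that the co-Russell set exists and so there is no term denoting it. I would take the self-membership assertion to be the sentence
\[
\sigma := \exists y\,(\forall x(x\in y \lra x\in x)\land y\in y),
\]
which says that there is a co-Russell set that is a member of itself; by $\msf{exten}$ the co-Russell set is unique whenever it exists, so this faithfully captures ``the co-Russell set is self-membered.'' Independence then amounts to showing $\msf{BST}\nvdash\sigma$ and $\msf{BST}\nvdash\neg\sigma$, each witnessed by a countermodel.

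The key observation driving both constructions is that the defining biconditional $x\in y \lra x\in x$ is a tautology on the diagonal $x=y$, namely $y\in y \lra y\in y$. Hence the co-Russell property places \emph{no} constraint on whether the co-Russell set belongs to itself, leaving precisely that single bit free. I would exploit this by using, in each case, a one-element domain $\{C\}$ and letting $C$ play the role of the co-Russell set.

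For $\msf{BST}\nvdash\neg\sigma$, take the model $\mcal{M}_1$ with domain $\{C\}$ and $C\in C$. The equality axioms and $\msf{exten}$ hold trivially on a one-element domain, so $\mcal{M}_1\models\msf{BST}$. Since the only self-membered set is $C$, the element $C$ satisfies $\forall x(x\in C \lra x\in x)$, so $C$ is a co-Russell set, and moreover $C\in C$; thus $\mcal{M}_1\models\sigma$. For $\msf{BST}\nvdash\sigma$, take $\mcal{M}_0$ with the same domain $\{C\}$ but empty membership relation, so $C\notin C$. Again $\mcal{M}_0\models\msf{BST}$; here no set is self-membered, so $C$ (having no elements) is again the co-Russell set, yet $C\notin C$, whence $\mcal{M}_0\models\neg\sigma$. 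The two models differ only in the presence of the single self-loop, which is exactly the free bit identified above.

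The only genuine subtlety—and thus the step I would be most careful about—is the faithfulness of the formalization rather than any technical difficulty in the models: one must check that $\msf{BST}$ does not covertly decide the assertion through extensionality or equality, which is why verifying $\mcal{M}_0,\mcal{M}_1\models\msf{exten}$ matters. Because both models are one-element, these checks are immediate, and the whole argument reduces to the observation that the diagonal instance of the co-Russell biconditional is vacuous. If one instead prefers to phrase independence relative to the assumption that the co-Russell set exists, i.e.\ in $\msf{BST}{+}\alpha$ with $\alpha:=\exists y\forall x(x\in y\lra x\in x)$, then the same two models work without change, since in each the element $C$ witnesses $\alpha$.
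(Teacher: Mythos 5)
Your proposal is correct and takes essentially the same approach as the paper: the paper's proof also exhibits a one-element model with a single Quine atom (where the co-Russell set is self-membered) and a one-element model consisting only of the empty set (where it is not). Your additional care in formalizing the self-membership assertion as a sentence and in checking $\msf{exten}$ is a welcome elaboration but does not change the argument.
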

\begin{proof}
    There exists a model in which the co-Russell set contains itself—namely, a~model with a single Quine atom. Conversely, there also exists a model in which the co-Russell set does not contain itself—specifically, a model consisting only of the empty set.
\end{proof}

While this is a promising start, it is far from sufficient to conclude that the co-Russell set has any uniquely significant independence property. This is because $\msf{BST}$ is an extremely weak theory with a vast number of independent statements. Moreover, $\msf{BST}$ contains many independent statements regarding the self-membership of various sets, none of which have ever been considered hypodoxical.

\begin{Theorem}
    The self-membership of the following sets is independent in $\msf{BST}{:}$\footnote{This theorem could potentially be extended to include certain notions of finite and infinite size, such as the set of all Dedekind-finite sets.}
    \begin{itemize}
        \item[\textup{(a)}] The set of all sets of size $n{>}0$, $n \in \N{:}$ $\{x : |x| = n\}$
        \item[\textup{(b)}] The set of all sets of size different from $n \in \N{:}$ $\{x : |x| \neq n\}$
    \end{itemize}
\end{Theorem}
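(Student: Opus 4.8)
The plan is to establish both independence claims semantically, exactly as in the proof for the co-Russell set: for each set and each of the two possible truth values of its self-membership I will exhibit a finite model of $\msf{BST}$ in which the relevant $\msf{UC}$-instance holds (so the set genuinely exists as an element of the domain) and in which the self-membership has the desired value. The observation that organizes everything is that, writing $A=\{x:|x|=n\}$ and $B=\{x:|x|\neq n\}$, self-membership reduces to a counting condition inside the model: $A\in A$ holds iff the model contains exactly $n$ sets of size $n$, while $B\in B$ holds iff the number of sets of size $\neq n$ is itself different from $n$. The task thus becomes to build finite extensional structures realizing or refuting these cardinality conditions. I would first note that $|x|=n$ is first-order expressible, so a model need only specify a membership relation, after which one simply counts $E$-predecessors.

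For part (a): to force $A\notin A$ I take the one-point model whose only element is $\emptyset$; since $n>0$ there are no size-$n$ sets, so $A=\emptyset$, giving $|A|=0\neq n$ and $A\notin A$. To force $A\in A$ I need a model with exactly $n$ size-$n$ sets, one of which is $A$ itself. For $n=1$ this is the single Quine atom $q=\{q\}$, where $q$ is the unique size-$1$ set and equals $A$. For $n\geq 2$ I would let $A$ contain itself together with $n-1$ auxiliary sets $t_1,\dots,t_{n-1}$, making each $t_i$ of size $n$ by padding a common block of auxiliary sets with one distinguishing auxiliary set; drawing all auxiliary sets from a finite chain of singletons $\emptyset,\{\emptyset\},\{\{\emptyset\}\},\dots$ keeps every auxiliary of size $0$ or $1$, hence never of size $n$, so that $A,t_1,\dots,t_{n-1}$ are the only size-$n$ sets and the count is exactly $n$.

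For part (b): to force $B\notin B$ (equivalently, exactly $n$ sets of size $\neq n$) I would use the finite von Neumann ordinal $n+1=\{0,1,\dots,n\}$ under standard membership; here the ordinal $k$ has size $k$, so the sets of size $\neq n$ are precisely $0,1,\dots,n-1$, exactly $n$ of them, and they are collected by the ordinal $n$ itself, giving $B=n$, $|B|=n$, and $B\notin B$. This works uniformly for every $n\geq 0$. To force $B\in B$ (some $|B|\neq n$) I would take the single Quine atom, in which $B=q$ and $|B|=1$; this settles every $n\neq 1$. For the remaining value $n=1$ the Quine atom fails (the required $B=\emptyset$ is not realized there), so I would instead use the two-element model $\{\emptyset,u\}$ with $u=\{\emptyset,u\}$ a universal set of size $2$, where $B=u$ and $|B|=2\neq 1$.

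The main obstacle is the two positive directions, $A\in A$ and $B\in B$, where the set must count itself and is therefore genuinely self-referential. Three requirements must be discharged simultaneously: the constructed set must actually lie in the domain (its intended extension must be realized by some element), extensionality must be preserved (the auxiliary and padding sets must all have distinct extensions), and the global count of size-$n$ (respectively size-$\neq n$) sets must come out to the exact target. Controlling this count while the target set both contributes to it and has its own size pinned by it is the delicate point; the padding-by-small-sets device in part (a) and the universal-set gadget for the stray value $n=1$ in part (b) are precisely what make the count land on target. The negative directions, by contrast, fall out immediately from the empty-set and ordinal models.
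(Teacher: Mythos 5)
Your proof is correct, and it pursues the same overall strategy as the paper's---exhibiting small models of $\msf{BST}$ that realize each truth value of the self-membership assertion---but the concrete constructions differ in instructive ways. For (a), the paper uses a single base structure ($n{+}1$ Quine atoms together with $n{-}1$ sets of $n$ Quine atoms) and observes that it admits two consistent expansions by $A=\{x:|x|=n\}$: one with $A\in A$ (so $|A|=n$) and one with $A\notin A$ (so $|A|=n{-}1$). You instead build two separate models, each of which \emph{forces} its answer, using the empty-set model for the negative direction and a padding chain of singletons for the positive one. Your explicit treatment of $n{=}1$ via a single Quine atom is a genuine improvement: for $n{=}1$ the paper's uniform recipe produces two Quine atoms of size one and no auxiliary sets, which already forces $|A|\geq 2$ and hence only yields the negative answer. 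For (b), the paper uses Quine-atom models, including an infinite one for the positive direction when $n>1$, whereas your von Neumann ordinal $n{+}1$ and the two-element universal-set gadget for the stray case $n{=}1$ keep every witness finite and make the counting argument uniform in $n$. Both arguments establish the theorem; yours is somewhat more self-contained in that each model determines the answer outright rather than relying on a structure with two admissible expansions, at the cost of slightly more bookkeeping to verify extensionality among the padding sets.
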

\begin{proof}
    (a) Consider a model with $n{+}1$ Quine atoms.
    Expand this model by adding $n{-}1$ distinct sets, each containing exactly $n$ Quine atoms. Now, introduce the set $\{x : |x| = n\}$, which includes at least $n{-}1$ sets of size $n$. This set may or may not contain itself.

    (b) For $n{=}0$, consider two single-element models: one consisting of just the empty set, and another consisting of a single Quine atom.

    For $n{=}1$, start with a model containing only the empty set. Expand this model by introducing the set $\{x : |x| \neq 1\}$. This set may or may not contain itself.

    For $n{>}1$, consider a model with $n$ Quine atoms. If expanded by the set $\{x : |x| \neq n\}$, this set will have size $n$ and therefore cannot contain itself. Alternatively, consider a model with an infinite number of Quine atoms. If expanded by $\{x : |x| \neq n\}$, the resulting set must contain itself.
\end{proof}

So, we should move to stronger extensions. Let us investigate whether there exists a finitely axiomatizable consistent extension of $\msf{BST}$ in which the self-membership of the co-Russell set is no longer independent. It turns out that such an extension does indeed exist.

\begin{Theorem}
    For any sentence $\varphi$ such that $\msf{BST}{+}\{\neg\varphi,\alpha\}$ is consistent for some $\msf{UC}$-instance $\alpha$, there is a non-trivial $\msf{NST}$-fragment that becomes trivial upon asserting $\varphi$.
\end{Theorem}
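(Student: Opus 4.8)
The plan is to adapt the Curry-like construction used in the re-axiomatization theorem above, building a single $\msf{UC}$-instance that behaves like a harmless (i.e.\ satisfiable) set while $\neg\varphi$ holds but collapses into the Russell set the moment $\varphi$ is asserted. I would write the given instance as $\alpha := \exists y\forall x(x\in y\lra\chi(x))$, so that $\chi$ is the defining condition of its main set, and fix a model $\mcal{M}\models\msf{BST}{+}\{\neg\varphi,\alpha\}$ guaranteed by the hypothesis. I would then define the $\msf{UC}$-instance
\[
    \beta := \exists y\forall x\bigl(x\in y\lra((\neg\varphi\ra\chi(x))\land(\varphi\ra x\notin x))\bigr)
\]
and claim that $\msf{BST}{+}\{\beta\}$ is the required non-trivial $\msf{NST}$-fragment.

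For non-triviality, I would observe that in $\mcal{M}$ the sentence $\neg\varphi$ holds, so the biconditional defining $\beta$ reduces to $x\in y\lra\chi(x)$; the witness supplied by $\alpha$ in $\mcal{M}$ therefore also witnesses $\beta$ (they are the same set by extensionality), giving $\mcal{M}\models\msf{BST}{+}\{\beta\}$ and hence consistency. For the triviality upon asserting $\varphi$, I would note that in $\msf{BST}{+}\{\beta\}{+}\varphi$ the added axiom $\varphi$ makes the defining condition of $\beta$ equivalent to $x\notin x$, so $\beta$ asserts the existence of the Russell set; by Theorem~\ref{thmpara}(a) this theory is trivial.

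The main obstacle---and the reason the hypothesis is stated with an auxiliary $\msf{UC}$-instance $\alpha$ rather than merely assuming $\msf{BST}{+}\neg\varphi$ consistent---is securing the model for the non-trivial direction. The Curry-like set $\beta$ must have an actual witness whenever $\neg\varphi$ holds, and a bare model of $\msf{BST}{+}\neg\varphi$ need not contain any set satisfying a prescribed comprehension condition (for instance, it may lack a universal set, so the naive choice $\beta := \exists y\forall x(x\in y\lra(\varphi\ra x\notin x))$ could fail). The instance $\alpha$ furnishes exactly such a safe value $\chi$ that is known to be co-satisfiable with $\neg\varphi$, which is precisely what makes the reduction of $\beta$ under $\neg\varphi$ land on an existing set rather than on something the model might refuse to provide.
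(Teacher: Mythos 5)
Your proposal is correct and uses exactly the construction in the paper: the same Curry-like instance $\beta:=\exists y\forall x(x\in y\lra((\neg\varphi\ra\chi(x))\land(\varphi\ra x\notin x)))$, with consistency secured by the model of $\msf{BST}{+}\{\neg\varphi,\alpha\}$ and triviality following from the collapse to the Russell set once $\varphi$ is asserted. Your closing remark on why the auxiliary instance $\alpha$ is needed to furnish a witness is a sound reading of the hypothesis, though the aside about extensionality is unnecessary for identifying the witness.
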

\begin{proof}
    Let $\alpha:=\exists y\forall x(x\in y \lra \psi(x))$.
    Consider the following $\msf{UC}$-instance $\beta$:
    \[
    \beta:=\exists y\forall x(x\in y\lra ((\neg\varphi\ra\psi(x))\land(\varphi\ra x\notin x)))
    \]
    Then $\msf{BST}{+}\beta$ is consistent, as it is equivalent to $\msf{BST}{+}\{\neg\varphi,\alpha\}$. However, $\msf{BST}{+}\{\beta,\varphi\}$ is inconsistent because, if $\varphi$ holds, the set defined by $\beta$ becomes equivalent to the Russell set. 
\end{proof}

\begin{Corollary}
    Let $\alpha$ be the $\msf{UC}$-instance asserting the existence of the co-Russell set\textup{:}
    \[
    \alpha:=\exists y\forall x(x\in y \lra x\in x)
    \]
    Let $\varphi$ be the sentence asserting the self-membership of the co-Russell set\textup{:}
    \[
    \varphi:=\forall y(\forall x(x\in y \lra x\in x)\ra y\in y)
    \]
    Define $\beta$ as follows\textup{:}
    \[
    \beta:=\exists y\forall x(x\in y\lra ((\varphi\ra x\in x)\land(\neg\varphi\ra x\notin x)))
    \]
    Then $\msf{BST} {+} \beta$ is consistent, and it is provable within this theory that the co-Russell set exists and is a member of itself.
\end{Corollary}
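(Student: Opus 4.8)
The plan is to derive this as a direct instance of the preceding theorem, recognizing the corollary's $\beta$ as the theorem's $\beta$ under the substitution of $\neg\varphi$ for the free sentence parameter. First I would set $\psi(x):=x\in x$ and observe that the theorem's template $\exists y\forall x(x\in y\lra((\neg\chi\ra\psi(x))\land(\chi\ra x\notin x)))$, instantiated with $\chi:=\neg\varphi$, becomes $\exists y\forall x(x\in y\lra((\neg\neg\varphi\ra x\in x)\land(\neg\varphi\ra x\notin x)))$, which by classical double-negation elimination is exactly the $\beta$ of the corollary. Thus the corollary is the special case of the theorem with $\chi=\neg\varphi$ and the named $\alpha$ playing the role of the required $\msf{UC}$-instance.

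To invoke the theorem I must discharge its hypothesis, namely that $\msf{BST}{+}\{\neg\chi,\alpha\}=\msf{BST}{+}\{\varphi,\alpha\}$ is consistent. I would exhibit the single-Quine-atom model $q$ (with $q\in q$ and $q=\{q\}$): here the only candidate for the co-Russell set is $q$ itself, which is self-membered, so both $\alpha$ and $\varphi$ hold. With the hypothesis verified, the theorem immediately yields that $\msf{BST}{+}\beta$ is consistent while $\msf{BST}{+}\{\beta,\neg\varphi\}$ is trivial; the latter gives $\msf{BST}{+}\beta\vdash\varphi$ by reductio, since triviality forces $\msf{BST}{+}\beta\vdash\neg\neg\varphi$.

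It remains to extract the two object-level assertions. The subtle point is that $\varphi$ is only a conditional—``any co-Russell set is self-membered''—and does not by itself assert that a co-Russell set exists; existence must come from $\beta$. So, reasoning inside $\msf{BST}{+}\beta$, I would take the main set $B$ furnished by $\beta$ and, using the just-proved $\varphi$, collapse its defining condition: since $\varphi$ holds the clause $\neg\varphi\ra x\notin x$ is vacuous and $\varphi\ra x\in x$ reduces to $x\in x$, so $\forall x(x\in B\lra x\in x)$. This exhibits $B$ as a co-Russell set, establishing existence, and instantiating $\varphi$ at $y:=B$ then yields $B\in B$.

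The hard part is bookkeeping rather than depth: one must keep the meta-level derivability of $\varphi$ cleanly separated from the object-level membership claim, and must ensure that the double-negation rewriting and the vacuous-clause simplification are carried out over $\msf{BST}$, i.e. classical logic with extensionality, where both steps are unproblematic. I would also flag the alternative self-contained route—verifying consistency directly in the Quine-atom model and proving $\varphi$ by an internal Russell argument (assuming $\neg\varphi$ forces $B$ to coincide with the Russell set)—which avoids citing the theorem altogether but essentially duplicates its proof.
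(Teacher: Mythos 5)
Your proposal is correct and matches the paper's intent: the paper states this Corollary without an explicit proof, presenting it as an immediate instance of the preceding theorem, and you correctly identify the required instantiation ($\chi:=\neg\varphi$, $\psi(x):=x\in x$), verify the consistency hypothesis via the single-Quine-atom model, and carefully extract both the existence and self-membership claims inside $\msf{BST}{+}\beta$. The only work you add beyond the paper is the (welcome) bookkeeping about double-negation elimination and the fact that existence comes from $\beta$ rather than from $\varphi$; both steps are sound.
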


Lastly, since essentially any set theory can be viewed as an $\msf{NST}$-fragment, we can ask how the self-membership of the co-Russell set behaves in various known set theories.

\begin{Theorem}
    The following statements hold\textup{:}\footnote{$\msf{ZF}$ stands for Zermelo-Fraenkel set theory, $\msf{ZF{-}FA}$ stands for Zermelo-Fraenkel set theory without the Foundation Axiom, $\msf{AFA}$ stands for the Anti-Foundation Axiom as defined in \citep{aczel}, $\msf{BAFA}$ stands for the Boffa's Anti-Foundation Axiom, $\msf{NF}$ stands for New Foundations, and $\msf{GPK^+_\infty}$ stands for a particular \textit{positive} set theory, as defined in \citep{esser}.}
    \begin{itemize}
        \item[\textup{(a)}]In $\msf{ZF}$, the co-Russell set does not belong to itself.
        \item[\textup{(b)}]In $\msf{ZF{-}FA{+}BAFA}$, the co-Russell set does not exist.
        \item[\textup{(c)}]In $\msf{ZF{-}FA{+}AFA}$, the co-Russell set does not exist.
        \item[\textup{(d)}]In $\msf{ZF{-}FA}$, the self-membership of the co-Russell set is independent and there exist extensions in which the co-Russell set is non-empty and does not contain itself.
        \item[\textup{(e)}]In $\msf{NF}$, the co-Russell set does not exist.
        \item[\textup{(f)}]In $\msf{GPK^+_\infty}$, the co-Russell set belongs to itself.
    \end{itemize}
\end{Theorem}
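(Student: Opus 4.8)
The plan is to treat this as a collection of six largely independent facts, each settled by working inside the relevant set theory and analyzing the co-Russell set $C = \{x : x \in x\}$ and whether the sentence $C \in C$ holds, fails, or is independent. The common thread is that the co-Russell comprehension instance $\exists y \forall x(x \in y \leftrightarrow x \in x)$ requires the ambient theory both to grant the existence of $C$ and to decide which sets are self-membered; so for each theory I first ask whether $C$ even exists, and only then ask about its self-membership.

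For the foundation-based parts I would argue as follows. For (a), in $\msf{ZF}$ the Foundation Axiom forces $x \notin x$ for every $x$, so $C = \{x : x \in x\}$ is provably empty, whence $C = \emptyset$ and $\emptyset \notin \emptyset$; thus $C \notin C$. For (d), in $\msf{ZF{-}FA}$ one loses Foundation: I would exhibit two models, one (a well-founded model of $\msf{ZF}$) in which $C = \emptyset$ and so $C \notin C$, and one containing a Quine atom $a = \{a\}$ together with further self-membered sets, arranged so that $C$ is non-empty yet $C \notin C$, plus a model in which $C \in C$; independence follows. The anti-foundation cases (b) and (c) are the subtle ones: under $\msf{AFA}$ (Aczel) or $\msf{BAFA}$ (Boffa) every set is coded by a (possibly non-well-founded) graph, and I expect that the class $\{x : x \in x\}$ is a proper class rather than a set, so $C$ does not exist. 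The hard part here is justifying properness precisely: I would argue via a cardinality or size argument that under these anti-foundation axioms there are \emph{too many} self-membered sets (e.g. for every set $z$ one can build a decoration producing a distinct self-membered set, injecting the universe into $\{x:x\in x\}$), so that comprehension on $x \in x$ cannot yield a set.

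For (e), $\msf{NF}$, the decisive tool is \emph{stratification}: the comprehension axiom of $\msf{NF}$ only grants $\{x : \varphi(x)\}$ when $\varphi$ is stratified, and the formula $x \in x$ is unstratified (the two occurrences of $x$ flanking $\in$ would need types $i$ and $i+1$ simultaneously). Hence $\msf{NF}$ does not prove the co-Russell set exists. I would want to strengthen this to the stated claim that $C$ \emph{does not exist}, which requires showing $\neg\exists y\,\forall x(x \in y \leftrightarrow x \in x)$ is a theorem; the clean route is a diagonal/Russell-style argument showing that positing such a $y$ lets one recover a Russell-type contradiction, or else citing that in $\msf{NF}$ the would-be co-Russell set provably fails to exist by the same route that kills the universal-complement constructions. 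For (f), in the positive set theory $\msf{GPK^+_\infty}$ the comprehension schema is restricted to \emph{positive} formulas, and $x \in x$ is positive (it contains no negation), so $C$ exists; I would then use the characteristic feature of $\msf{GPK^+_\infty}$ that its sets are exactly the closed classes in a suitable topology together with the presence of the universal set, and argue that $C$, being the set of self-membered sets, contains the universal-set-like fixed points and in particular contains itself, giving $C \in C$.

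The main obstacle I anticipate is the non-uniformity: unlike the earlier theorems, which had one mechanism, this theorem packages six different technical facts, and each of (b), (c), (e), (f) rests on a nontrivial structural feature (graph decorations for anti-foundation, stratification for $\msf{NF}$, positivity and topological closure for $\msf{GPK^+_\infty}$) that must be invoked correctly. Concretely, the single hardest step is establishing properness of $\{x : x \in x\}$ under $\msf{AFA}$ and $\msf{BAFA}$ in parts (b)–(c): I must produce an explicit construction showing the collection of self-membered sets is proper-class-sized (plausibly by assigning to each set $z$ a distinct non-well-founded self-membered set via a bisimulation-respecting decoration), since this is what blocks comprehension and yields non-existence rather than merely non-membership. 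The remaining parts reduce, once the right axiom is cited, to short Russell-style or model-theoretic arguments of the kind already used throughout the paper.
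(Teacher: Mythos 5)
Your parts (a) and (d) match the paper's argument: Foundation gives $C=\emptyset$, hence $C\notin C$; and models of $\msf{ZF{-}FA}$ built over one Quine atom versus two Quine atoms give, respectively, $C\in C$ and a non-empty $C$ with $C\notin C$. For (e) you correctly see that stratification only yields non-provability of existence rather than non-existence, and one of your fallback routes is in fact the paper's actual argument: $\msf{NF}$ is closed under complements, so if $\{x:x\in x\}$ existed, its complement---the Russell set---would exist, a contradiction. Two genuine gaps remain, however.

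First, in (b)--(c) you explicitly defer the key step, the properness of $\{x:x\in x\}$, to a construction you do not supply, and your uniform ``one decoration per set $z$'' plan glosses over exactly the distinction that matters: under $\msf{AFA}$, strong extensionality collapses all Quine atoms to a single one, so the most obvious proper-class family of self-membered sets is unavailable there. The paper's witnesses differ between the two cases: a proper class of Quine atoms under $\msf{BAFA}$, but under $\msf{AFA}$ the sets $A=\{A,B\}$ for each well-founded $B$, which are pairwise distinct because the underlying graphs are not bisimilar. Second, your argument for (f) does not go through as written: from ``$C$ exists by positive comprehension and the sets of $\msf{GPK^+_\infty}$ are the closed classes'' it does not follow that $C$ ``contains the universal-set-like fixed points and in particular contains itself''---that final step is a non sequitur, since $C\in C$ requires showing that $C$ itself is self-membered, not that it contains other large sets. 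The paper instead proves (f) via the explicit fixed-point construction of its Section 4: a set $H^+$ satisfying $x\in H^+\leftrightarrow(x\in x\lor x=H^+)$ is built from positive instances of $\msf{UC}$, satisfies $H^+\in H^+$ outright, and then coincides with the co-Russell set by extensionality. You would need to supply some such fixed-point argument to close (f).
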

\begin{proof}

    (a) By the Foundation Axiom, no set can contain itself. Therefore, the co-Russell set must be empty.

    (b) In $\msf{ZF{-}FA{+}BAFA}$, there exists a proper class of Quine atoms. Therefore, the co-Russell set is also a proper class in this context.

    (c) In $\msf{ZF{-}FA{+}AFA}$, there exists a non-well-founded set $A=\{x:x=A\lor x=B\}$ for every well-founded set $B$. Therefore, the co-Russell set is a proper class in this context.

    (d) Consider a cumulative hierarchy $V'$ on a single Quine atom. $V'$ satisfies $\msf{ZF{-}FA}$ and contains a single self-containing set, the Quine atom. This Quine atom is equal to the co-Russell set. On the other hand, consider a cumulative hierarchy $V''$ on two Quine atoms. $V''$ satisfies $\msf{ZF{-}FA}$ and contains exactly two self-containing sets, the Quine atoms. A set containing exactly these two Quine atoms is equal to the co-Russell set.

    (e) $\msf{NF}$ is closed under complements. If the co-Russell set existed, the Russell set would also have to exist, leading to a contradiction.

    (f) $\msf{GPK^+_\infty}$ includes a positive comprehension schema, meaning $\msf{UC}$ is restricted to positive formulas (those without negation or implication). We defer the proof to Section \ref{paradox}, where we demonstrate the existence of the co-Russell set $H^+$ using only positive instances of $\msf{UC}$. It will be evident that $H^+ \in H^+$.
\end{proof}

Thus, it appears there are various mathematical determinants influencing the self-membership of the co-Russell set. In Section \ref{paradox}, we identify additional determinants within very weak extensions of $\msf{BST}$.

In our brief investigation, we found no property tied to independence that distinguishes 
the self-membership assertion of the co-Russell set
from other set-theoretic sentences.\footnote{Readers can compare this situation with, for example, the results in \citep{honzik}, which show that nearly all of the large cardinals defined so far do not settle the status of the continuum hypothesis.} This leads us to conclude that the concept of a set-theoretic hypodox remains unclear in the formal context—at least for now. Specifically, if the notion is not to be redundant and the self-membership assertion of the co-Russell set should qualify as a hypodox, while other set-theoretic sentences with no apparent connection to paradoxes should not.

\section{co-Russell's paradox}\label{paradox}
In this section, we derive a contradiction in an $\msf{NST}$-fragment based on the unique properties of the co-Russell set.
Specifically, we define the sets $H^+$ and $H^-$, which, by $\msf{exten}$, will both be equal to the co-Russell set $\{x : x \in x\}$, but such that $H^+ \in H^+$ and $H^- \notin H^-$. This distinction contradicts the axioms for equality.

The assumptions required to prove the existence of $H^+$ and $H^-$ correspond to the determinants of the self-membership assertion of the co-Russell set referenced at the end of the previous section.

We rely on the following four instances of $\msf{UC}$:\footnote{The form of these constructions was adapted from James E. Hanson’s proof of the self-membership of the co-Russell set in $\msf{GPK_\infty^+}$, shared on \textit{Mathematics Stack Exchange} (\href{https://math.stackexchange.com/q/4515237}{math.stackexchange.com/q/4515237}). Hanson’s approach, in turn, draws on methods from \citep{cantini}.}
\begin{equation}
    \text{ pairing }(\msf{pair})\textup{:}\quad\exists y\forall x(x\in y \leftrightarrow (x=a\lor x=b)) 
\end{equation}

For any two variables/constants $a,b$ and formula $\varphi$, let us define the following abbreviations:\footnote{When using abbreviations, we assume that variables are chosen so that there are no conflicts in the context of use.}
\[
\begin{aligned}
    \varphi(\{a,b\}) :=&\quad\exists y(\varphi(y)\land\forall x(x\in y \leftrightarrow (x=a\lor x=b)))\\
    \varphi(\tuple{a,b}):=&\quad \exists y(\varphi(y)\land\forall x(x\in y \leftrightarrow (x=\{a,a\}\lor x=\{a,b\})))
\end{aligned}
\]
\begin{equation}
    \text{extracting }(\msf{extract})\textup{:}\quad\exists y \forall x(x\in y \leftrightarrow \exists z(z=\tuple{a,x} \land z \in a))
\end{equation}
For any variable/constant $a$ and formula $\varphi$, let us define the following abbreviation:
\[
    \varphi(a[a]):=\quad\exists y(\varphi(y)\land\forall x(x\in y \leftrightarrow \exists z( z=\tuple{a,x}\land z\in a)))
\]
\begin{equation}
    \exists y\forall x(x\in y \leftrightarrow \exists v,w (x{=}\tuple{v,w}\land (w\in w \lor w{=}v[v])))
\end{equation}
Let $A$ be a constant s.t. $\forall x(x\in A \leftrightarrow \exists v,w (x{=}\tuple{v,w}\land (w\in w \lor w{=}v[v])))$.
\begin{equation}
    \exists y\forall x(x\in y \leftrightarrow \exists v,w(x{=}\tuple{v,w}\land (w\in w \land w{\neq}v[v])))
\end{equation}
Let $B$ be a constant s.t. $\forall x(x\in B \leftrightarrow \exists v,w(x{=}\tuple{v,w}\land (w\in w \land w{\neq}v[v])))$.

Now, we derive a contradiction from $\msf{BST}$ and the above instances of $\msf{UC}$. Let $H^+$ be a constant such that $H^+{=}A[A]$ and $H^-$ be a constant such that $H^-{=}B[B]$. $H^+$ and $H^-$ exist by $\msf{extract}$.
\begin{Lemma}\label{theorem1}
    The following holds for all $x$\textup{:}
\[
\begin{aligned}
    x\in H^+\ \longleftrightarrow\ (x\in x \lor x{=}H^+)\\
    x\in H^-\ \longleftrightarrow\ (x\in x \land x{\neq}H^-)
\end{aligned}
\]
\end{Lemma}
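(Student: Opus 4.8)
The plan is to unwind the definitions of $H^+$ and $H^-$ through the abbreviations for ordered pairs and the extraction operation, reducing everything to a single application of the injectivity of the Kuratowski pairing, whose existence $H^+ = A[A]$ and $H^- = B[B]$ (as constants) is already secured by $\msf{extract}$.

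First I would record the two facts about pairing on which the whole argument rests. By repeated use of $\msf{pair}$ (to form $\{a\}$, then $\{a,b\}$, then $\{\{a\},\{a,b\}\}$) together with $\msf{exten}$, for any $a,b$ there is a unique set $\tuple{a,b}$, so that the condition "$z = \tuple{a,x}$" occurring in $\msf{extract}$ is genuinely satisfiable by a unique $z$. The key lemma is the injectivity of this encoding: $\tuple{a,b} = \tuple{c,d} \lra (a = c \land b = d)$. The $\la$ direction is immediate; the $\ra$ direction I expect to be the main obstacle, since it is the only genuinely non-routine step. It requires the standard Kuratowski case analysis — splitting on whether $a = b$ — and in each branch appealing to $\msf{exten}$ to compare the elements $\{a\}$ and $\{a,b\}$ of the two pairs. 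This is elementary but fiddly, and care is needed because all the "$=\{\cdot,\cdot\}$" and "$=\tuple{\cdot,\cdot}$" expressions are themselves abbreviations that must first be expanded back into $\in$- and $=$-statements before $\msf{exten}$ can be applied.

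Next I would establish the behaviour of extraction. Unpacking the abbreviation $a[a]$ directly from $\msf{extract}$ gives, for every $x$, that $x \in a[a] \lra \exists z(z = \tuple{a,x} \land z \in a)$; since $\tuple{a,x}$ exists and is unique, this is equivalent to $\tuple{a,x} \in a$. Applying this with $a := A$ and recalling $H^+ = A[A]$ yields $x \in H^+ \lra \tuple{A,x} \in A$, and unfolding the defining condition of $A$ gives $\tuple{A,x} \in A \lra \exists v,w(\tuple{A,x} = \tuple{v,w} \land (w \in w \lor w = v[v]))$. Here the injectivity lemma does all the work: the existential is witnessed precisely when $v = A$ and $w = x$, collapsing the right-hand side to $x \in x \lor x = A[A]$, i.e. $x \in x \lor x = H^+$. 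This establishes the first biconditional.

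Finally, the second biconditional follows by the identical computation with $B$ in place of $A$ and $H^- = B[B]$: unfolding the defining condition of $B$ gives $\tuple{B,x} \in B \lra \exists v,w(\tuple{B,x} = \tuple{v,w} \land (w \in w \land w \neq v[v]))$, and injectivity again forces $v = B$, $w = x$, collapsing this to $x \in x \land x \neq B[B]$, i.e. $x \in x \land x \neq H^-$. Because injectivity is itself a biconditional, both directions of each claimed equivalence come out of the same argument, so no separate converse needs to be verified.
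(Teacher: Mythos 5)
Your proposal is correct and follows essentially the same route as the paper's own proof: unfold $H^+{=}A[A]$ and $H^-{=}B[B]$ via $\msf{extract}$, unfold the defining conditions of $A$ and $B$, and discharge the pairing existentials using $\msf{pair}$ and $\msf{exten}$. The only difference is one of explicitness: you isolate and prove the injectivity of the Kuratowski pair, which the paper uses silently under the phrase ``by simply unpacking the definitions''---a worthwhile addition, since that is the one genuinely non-routine step in collapsing $\exists v\exists w(\tuple{A,x}{=}\tuple{v,w}\land\dots)$ to the case $v{=}A$, $w{=}x$.
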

\begin{proof}
    By simply unpacking the definitions, we obtain:
    \[
    \begin{aligned}
        x\in H^+\ &\longleftrightarrow\ \exists z( z{=}\tuple{A,x} \land z\in A )\ &\longleftrightarrow\ \exists z( z{=}\tuple{A,x} \land (x\in x \lor x{=}H^+) ) \\
        x\in H^-\ &\longleftrightarrow\ \exists z( z{=}\tuple{B,x} \land z\in B )\ &\longleftrightarrow\ \exists z( z{=}\tuple{B,x} \land (x\in x \land x{\neq}H^-) )
    \end{aligned}
    \]
In the rightmost equivalence we have replaced $A[A]$ ($B[B]$) with $H^+$ ($H^-$) because $H^+{=}A[A]$ ($H^-{=}B[B]$). This substitution is the core idea.

By $\msf{pair}$, we know that if $y$ and $x$ exist, then $\exists z(z=\tuple{y,x})$ holds. So any sentence of the form $\exists z(z=\tuple{y,x} \land \varphi)$ is equal to $\varphi$. Last sentences in the chains of equivalences above are of such form.
\end{proof}

\begin{Theorem}\label{theorem2}
    $H^+{\in}H^+$
\end{Theorem}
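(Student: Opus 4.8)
The plan is to read the result straight off the first biconditional of Lemma~\ref{theorem1}. Since that biconditional is asserted for all $x$, and since $H^+$ is a genuine constant of the language (it exists by $\msf{extract}$, as noted just before the lemma), I would instantiate the universally quantified $x$ at $H^+$, obtaining
\[
    H^+\in H^+\ \longleftrightarrow\ (H^+\in H^+ \lor H^+{=}H^+).
\]

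The key observation is then purely logical: the second disjunct on the right, $H^+{=}H^+$, is an instance of the reflexivity axiom $\forall a(a=a)$ of $\msf{BST}$ and is therefore provable. Hence the disjunction $H^+\in H^+ \lor H^+{=}H^+$ holds unconditionally. Reading the displayed biconditional from right to left, the truth of its right-hand side forces the truth of its left-hand side, so $H^+\in H^+$ follows. In propositional terms this is just the tautology that $P\leftrightarrow(P\lor Q)$ together with $Q$ yields $P$, applied with $P:=(H^+\in H^+)$ and $Q:=(H^+{=}H^+)$.

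I expect no real obstacle here: essentially all the content of the theorem has already been packed into Lemma~\ref{theorem1}, whose proof carried out the essential self-referential substitution of $H^+$ for $A[A]$. The only point demanding a moment's care is the legitimacy of the instantiation—namely that $H^+$ names an existing object, so that reflexivity may be applied to it—but this is guaranteed by $\msf{extract}$. It is worth flagging that the same manoeuvre applied to the second biconditional of Lemma~\ref{theorem1} runs in the opposite direction: substituting $x:=H^-$ gives $H^-\in H^- \leftrightarrow (H^-\in H^- \land H^-{\neq}H^-)$, whose right-hand conjunct $H^-{\neq}H^-$ is false, forcing $H^-\notin H^-$. This asymmetry between $H^+$ and $H^-$—both of which equal the co-Russell set by $\msf{exten}$—is precisely what will drive the contradiction in the sequel.
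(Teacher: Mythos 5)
Your proof is correct and matches the paper's own argument exactly: both instantiate the first biconditional of Lemma~\ref{theorem1} at $x:=H^+$ and observe that the reflexivity of equality makes the right-hand side true, forcing $H^+\in H^+$. No differences worth noting.
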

\begin{proof}
From Lemma \ref{theorem1}, we obtain the following.
\[
H^+{\in} H^+\ \longleftrightarrow\ (H^+{\in}H^+ \lor H^+{=}H^+)
\]
The right side of equivalence is clearly true, since $H^+{=}H^+$ holds. Thus $H^+{\in}H^+$.
\end{proof}

\begin{Theorem}\label{theorem3}
    $H^-{\notin}H^-$
\end{Theorem}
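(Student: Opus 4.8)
The plan is to mirror exactly the proof of Theorem~\ref{theorem2}, exploiting the symmetry between the two equivalences in Lemma~\ref{theorem1}. Since the positive case $H^+ \in H^+$ was settled by instantiating the $H^+$-equivalence at $x = H^+$ and observing that the disjunct $H^+ = H^+$ forces the right-hand side true, I expect the negative case to follow by instantiating the $H^-$-equivalence at $x = H^-$ and observing that the conjunct $H^- \neq H^-$ forces the right-hand side \emph{false}.

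Concretely, I would first invoke Lemma~\ref{theorem1} with the variable $x$ set to the constant $H^-$, yielding
\[
H^- \in H^- \ \longleftrightarrow\ (H^- \in H^- \land H^- \neq H^-).
\]
Next I would appeal to the reflexivity axiom of equality, $\forall a\,(a = a)$, which is part of $\msf{BST}$, to conclude that $H^- = H^-$ holds, and hence that $H^- \neq H^-$ is false. A false conjunct makes the entire conjunction false in classical logic, so the right-hand side of the equivalence is false, and by the biconditional so is $H^- \in H^-$. This delivers $H^- \notin H^-$.

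I do not anticipate any real obstacle here: the argument is purely propositional once Lemma~\ref{theorem1} is in hand, and the only set-theoretic input is the reflexivity of equality already secured by $\msf{BST}$. The single point worth stating carefully is that instantiating the universally quantified $x$ at the particular constant $H^-$ is legitimate, which it is since $H^-$ is a genuine constant of the language introduced via $\msf{extract}$. Taken together with Theorem~\ref{theorem2}, this establishes $H^+ \in H^+$ while $H^- \notin H^-$; since by $\msf{exten}$ both constants coincide with $\{x : x \in x\}$, the two conclusions will then collide with the axioms for equality, producing the promised contradiction in the $\msf{NST}$-fragment.
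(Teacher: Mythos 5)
Your proposal is correct and follows essentially the same route as the paper's own proof: instantiate the $H^-$-equivalence of Lemma~\ref{theorem1} at $x = H^-$ and note that the conjunct $H^- \neq H^-$ falsifies the right-hand side, hence the left-hand side. The only cosmetic difference is that the paper phrases this as a reductio (assuming $H^- \in H^-$ first), whereas you read the falsity of the right-hand side directly off the biconditional, which is if anything slightly cleaner.
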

\begin{proof}
    Assume that $H^-{\in}H^-$. From Lemma \ref{theorem1}, we obtain the following.
\[
H^-{\in}H^-\ \longleftrightarrow\ (H^-{\in}H^- \land H^-{\neq}H^-)
\]
The right side of equivalence is clearly false, since $H^-{\neq}H^-$ is false. Thus $H^-{\notin}H^-$.
\end{proof}

\begin{Corollary}\label{cl1}
    $H^+{\neq}H^-$
\end{Corollary}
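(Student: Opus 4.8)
The plan is to derive $H^+ \neq H^-$ directly from Theorems \ref{theorem2} and \ref{theorem3} by contraposing the substitutivity of equality. Since we already have $H^+ \in H^+$ and $H^- \notin H^-$, the two constants differ with respect to a property, namely $\in$-self-membership, that identical objects are forced to share; hence they cannot be equal. I would therefore argue by contradiction, assuming $H^+ = H^-$ and deriving a clash with one of the two preceding theorems.

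Concretely, the only care needed is that the self-membership predicate $x \in x$ contains two occurrences of its argument, so the Leibniz schema $\forall a \forall b((a = b) \rightarrow (\varphi(a) \rightarrow \varphi(b)))$ fixed in $\msf{BST}$ must be applied in two steps. First, instantiating $\varphi(x) := (x \in H^+)$ with the premise $H^+ \in H^+$ from Theorem \ref{theorem2} yields $H^- \in H^+$. Then, instantiating $\varphi(x) := (H^- \in x)$ with this intermediate conclusion yields $H^- \in H^-$. This contradicts Theorem \ref{theorem3}, which asserts $H^- \notin H^-$. Hence the assumption $H^+ = H^-$ is untenable and $H^+ \neq H^-$.

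I do not expect any genuine obstacle here: the statement is an immediate consequence of the two preceding theorems together with the equality axioms of $\msf{BST}$, and the only technical subtlety is the harmless double application of the Leibniz schema just described. The point worth flagging is conceptual rather than computational. Both $H^+$ and $H^-$ were constructed so as to be extensionally the co-Russell set $\{x : x \in x\}$, so establishing $H^+ \neq H^-$ is exactly the step that exposes the impending tension with $\msf{exten}$ and the equality axioms, preparing the contradiction toward which the section is driving.
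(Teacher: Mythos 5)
Your proof is correct and follows the same route as the paper, which simply cites Theorems \ref{theorem2} and \ref{theorem3} together with the axioms of equality; your only addition is spelling out the two applications of the Leibniz schema, which is a harmless elaboration of the same argument.
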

\begin{proof}
    Follows from Theorem \ref{theorem2}, Theorem \ref{theorem3} and axioms of equality.
\end{proof}

\begin{Lemma}\label{theorem4}
    $H^-{\subseteq}H^+$
\end{Lemma}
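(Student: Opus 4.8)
The plan is to unpack the subset relation $H^-\subseteq H^+$ into its defining universal conditional, namely $\forall x(x\in H^-\ra x\in H^+)$, and then verify this conditional pointwise using the two equivalences supplied by Lemma \ref{theorem1}. The whole argument is a single implication chain, so no model-theoretic construction or fixed-point manipulation is needed here; everything rides on the characterizations already established.

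Concretely, I would fix an arbitrary $x$ and assume $x\in H^-$. Applying the second equivalence of Lemma \ref{theorem1}, this assumption yields $x\in x\land x{\neq}H^-$. From this conjunction I discard the second conjunct and retain only $x\in x$. Now I invoke the first equivalence of Lemma \ref{theorem1}: since $x\in x$ holds, the disjunction $x\in x\lor x{=}H^+$ holds a fortiori, and therefore $x\in H^+$. Generalizing over $x$ gives $\forall x(x\in H^-\ra x\in H^+)$, which is exactly $H^-\subseteq H^+$.

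The key observation driving the proof is purely logical: the membership condition for $H^-$ is a conjunction whose first conjunct $x\in x$ is precisely one of the two disjuncts in the membership condition for $H^+$. Thus every element of $H^-$ automatically satisfies the weaker condition defining $H^+$. I expect no genuine obstacle here—the only point requiring a moment's care is the passage from the conjunction $x\in x\land x{\neq}H^-$ to the disjunction $x\in x\lor x{=}H^+$, which is justified simply by weakening a conjunct into a disjunct sharing the clause $x\in x$.

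It is worth noting in passing that the reverse inclusion does \emph{not} hold, since $H^+$ contains itself (Theorem \ref{theorem2}) while $H^-$ does not (Theorem \ref{theorem3}); the asymmetry recorded in Corollary \ref{cl1} shows that $H^-\subsetneq H^+$ is proper, and this failure of symmetry is what the forthcoming contradiction will exploit.
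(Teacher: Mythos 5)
Your proof of the inclusion itself is correct and is essentially identical to the paper's: both unpack $H^-\subseteq H^+$ as $\forall x(x\in H^-\ra x\in H^+)$, rewrite both sides via Lemma \ref{theorem1}, and observe that the conjunct $x\in x$ immediately yields the disjunction $x\in x\lor x{=}H^+$. However, your closing aside is mistaken about the structure of the surrounding argument: the reverse inclusion $H^+\subseteq H^-$ \emph{is} provable in this fragment (that is Lemma \ref{theorem5}, which uses Theorems \ref{theorem2} and \ref{theorem3} to discharge the disjunct $x{=}H^+$ and establish $x{\neq}H^-$), and the contradiction is obtained precisely because both inclusions hold---giving $H^+{=}H^-$ by $\msf{exten}$---while Corollary \ref{cl1} gives $H^+{\neq}H^-$; the inclusion is not proper.
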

\begin{proof}
    We need to prove that $\forall x(x\in H^- \rightarrow x\in H^+)$. By substituting for the equal statements from Lemma \ref{theorem1}, we get:
    \[
    \forall x((x\in x \land x{\neq}H^- )\rightarrow(x\in x \lor x{=}H^+ ) )
    \]
    Let us pick a new constant $c$ and assume that the following (antecedent for $c$) holds: $(c\in c \land c{\neq}H^-)$.
    By assumption, $c\in c$ holds, and therefore the following (succedent for $c$) holds: $(c\in c \lor c{=}H^+)$.
    Thus, $H^-{\subseteq}H^+$.
\end{proof}

\begin{Lemma}\label{theorem5}
    $H^-{\supseteq}H^+$
\end{Lemma}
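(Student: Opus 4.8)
The plan is to prove the reverse inclusion, i.e. $\forall x(x \in H^+ \rightarrow x \in H^-)$, in direct parallel to the proof of Lemma \ref{theorem4}. Substituting the equivalences from Lemma \ref{theorem1} for both membership statements, the goal becomes
\[
\forall x\bigl((x \in x \lor x = H^+) \rightarrow (x \in x \land x \neq H^-)\bigr).
\]
First I would fix an arbitrary (new) constant $c$, assume the antecedent $c \in c \lor c = H^+$, and then split into two cases according to this disjunction.

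In the first case, $c \in c$, the left conjunct of the succedent is immediate, so it remains only to establish $c \neq H^-$. This is exactly where Theorem \ref{theorem3} is used: if we had $c = H^-$, then $c \in c$ would yield $H^- \in H^-$, contradicting $H^- \notin H^-$. Hence $c \neq H^-$, and the succedent $c \in c \land c \neq H^-$ holds.

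In the second case, $c = H^+$, I would reduce both conjuncts of the succedent to facts already in hand. The conjunct $c \in c$ becomes $H^+ \in H^+$, which holds by Theorem \ref{theorem2}; and the conjunct $c \neq H^-$ becomes $H^+ \neq H^-$, which holds by Corollary \ref{cl1}. So the succedent again holds, and since both cases discharge it, we conclude $H^+ \subseteq H^-$, that is, $H^- \supseteq H^+$.

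The step I expect to be the only real subtlety—rather than a genuine obstacle—is that, unlike Lemma \ref{theorem4}, where the succedent was the weaker disjunction $x \in x \lor x = H^+$ and followed at once from $c \in c$, here the succedent is the stronger conjunction, so each case must separately discharge the extra $c \neq H^-$ conjunct by invoking the self-membership facts ($H^- \notin H^-$ in the first case, $H^+ \in H^+$ together with $H^+ \neq H^-$ in the second). Once this lemma is combined with Lemma \ref{theorem4} via $\msf{exten}$, one obtains $H^+ = H^-$, which contradicts Corollary \ref{cl1} and thereby delivers the intended contradiction.
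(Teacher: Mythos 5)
Your proof is correct and follows essentially the same route as the paper's: reduce both memberships via Lemma \ref{theorem1}, assume the antecedent for a fresh constant $c$, and discharge the conjunct $c \neq H^-$ using Theorems \ref{theorem2} and \ref{theorem3}. The only cosmetic difference is that you split into explicit cases and invoke Corollary \ref{cl1} in the second case, whereas the paper first derives $c \in c$ uniformly (using $H^+ \in H^+$ to handle the $c = H^+$ disjunct) and then concludes $c \neq H^-$ from $H^- \notin H^-$; the content is identical.
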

\begin{proof}
    We need to prove that $\forall x(x\in H^+ \rightarrow x\in H^-)$. By substituting for the equal statements from Lemma \ref{theorem1}, we get:
    \[
    \forall x((x\in x \lor x{=}H^+) \rightarrow (x\in x \land x{\neq}H^- ) )
    \]
    Let us pick a new constant $c$ and assume that the following (antecedent for $c$) holds: $(c\in c \lor c{=}H^+)$.
    By assumption, $(c\in c\lor c{=}H^+)$ holds, but since $H^+{\in}H^+$ by Theorem \ref{theorem2}, we can conlude that $c\in c$ holds. Therefore, it must be that $c{\neq}H^-$, since $H^-{\notin}H^-$ by Theorem \ref{theorem3}. Thus, the following (succedent for $c$) holds: $(c\in c \land c{\neq}H^-)$.
    Thus, $H^-{\supseteq}H^+$.
\end{proof}

\begin{Corollary}\label{cl2}
    $H^+{=}H^-$
\end{Corollary}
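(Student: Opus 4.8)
The plan is to read Corollary \ref{cl2} as the immediate consequence of the two inclusions already in hand, closed off by extensionality. Lemma \ref{theorem4} gives $H^-\subseteq H^+$, which unpacks to $\forall x(x\in H^-\ra x\in H^+)$, while Lemma \ref{theorem5} gives $H^-\supseteq H^+$, i.e.\ $\forall x(x\in H^+\ra x\in H^-)$. First I would conjoin these two universally quantified implications; since both range over the same variable $x$, a routine predicate-logic manipulation yields $\forall x(x\in H^-\lra x\in H^+)$.

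This biconditional is exactly the antecedent of the extensionality axiom $\msf{exten}$ instantiated with $y:=H^-$ and $z:=H^+$. Applying $\msf{exten}$ and discharging that antecedent then delivers $H^-=H^+$, from which $H^+=H^-$ follows by the symmetry of equality (itself derivable from the axioms of equality listed for $\msf{BST}$). No model-theoretic or combinatorial work is needed; this is a purely deductive step inside $\msf{BST}$ plus the four $\msf{UC}$-instances.

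I do not anticipate a genuine obstacle here, since all the substantive content lives in Lemmas \ref{theorem4} and \ref{theorem5}, whose proofs crucially invoked $H^+\in H^+$ (Theorem \ref{theorem2}) and $H^-\notin H^-$ (Theorem \ref{theorem3}). The only point worth emphasizing is the role this corollary plays rather than its difficulty: together with Corollary \ref{cl1}, which asserts $H^+\neq H^-$, it yields the promised contradiction in the $\msf{NST}$-fragment, completing the derivation of co-Russell's paradox. In that sense the real work is not in proving $H^+=H^-$ but in recognizing that the same four $\msf{UC}$-instances simultaneously force $H^+=H^-$ and $H^+\neq H^-$, so that the contradiction is unavoidable once both inclusions are established.
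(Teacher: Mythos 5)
Your proposal is correct and matches the paper's own proof exactly: both derive the mutual inclusion from Lemmas \ref{theorem4} and \ref{theorem5} and then apply $\msf{exten}$ to conclude $H^+{=}H^-$. The paper states this in one line; you have merely spelled out the routine predicate-logic steps.
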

\begin{proof}
    Follows from Lemma \ref{theorem4}, Lemma \ref{theorem5} and $\msf{exten}$.
\end{proof}

\begin{Corollary}
    $\bot$
\end{Corollary}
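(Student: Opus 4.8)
The plan is to observe that the contradiction is already fully assembled from the two immediately preceding corollaries, so the final step is a one-line appeal to classical logic. Corollary~\ref{cl1} gives $H^+{\neq}H^-$, extracted from the divergent self-membership behaviour of the two constants (Theorem~\ref{theorem2} supplies $H^+{\in}H^+$ and Theorem~\ref{theorem3} supplies $H^-{\notin}H^-$) via the substitutivity axiom of equality: if $H^+$ and $H^-$ were equal, then $\varphi(H^+)$ and $\varphi(H^-)$ would agree for $\varphi(\,\cdot\,):=(\,\cdot\,){\in}(\,\cdot\,)$, contradicting that one self-belongs and the other does not. Corollary~\ref{cl2} gives the reverse identity $H^+{=}H^-$, obtained from the two mutual-inclusion lemmas (Lemma~\ref{theorem4} and Lemma~\ref{theorem5}) together with $\msf{exten}$. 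To finish, I would simply conjoin these: from $H^+{=}H^-$ and $H^+{\neq}H^-$ one derives $\bot$ outright. No new construction, model, or case analysis is required.

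I would emphasise that the substantive work sits entirely upstream, not in this capstone. The heart of the argument is Lemma~\ref{theorem1}, where the extraction/pairing machinery lets $A[A]$ and $B[B]$ refer to $H^+$ and $H^-$ inside the very membership conditions that define them, yielding the self-referential equivalences $x{\in}H^+\lra(x{\in}x\lor x{=}H^+)$ and $x{\in}H^-\lra(x{\in}x\land x{\neq}H^-)$. Once these fixed-point forms are in hand, Theorems~\ref{theorem2} and~\ref{theorem3} resolve the self-membership of each set by straightforward evaluation, and the inclusion lemmas show that each set has exactly the extension $\{x:x{\in}x\}$, forcing the extensional collapse. The final Corollary merely records that the intensional distinction $H^+{\neq}H^-$ and the extensional identity $H^+{=}H^-$ cannot coexist.

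Accordingly, the only thing to check is that the two corollaries are genuinely contradictory as stated, which they are by the defining property of $\neq$. The main obstacle, if one wants to call it that, is conceptual rather than technical: one must be satisfied that $H^+{=}H^-$ (from extensionality on equal extensions) and $H^+{\neq}H^-$ (from the equality axioms on differing self-membership) are both legitimately derivable in $\msf{BST}$ extended by the four $\msf{UC}$-instances, with neither derivation tacitly presupposing the negation of the other. Since Corollary~\ref{cl2} invokes only $\msf{exten}$ and the inclusion lemmas, while Corollary~\ref{cl1} invokes only the equality axioms and the self-membership theorems, the two proofs are independent and may be combined without circularity. Hence the proof of $\bot$ reads, in full: \emph{immediate from Corollary~\ref{cl1} and Corollary~\ref{cl2}.}
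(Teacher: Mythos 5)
Your proof is correct and matches the paper's: the contradiction is obtained immediately by combining Corollary~\ref{cl1} ($H^+{\neq}H^-$) with Corollary~\ref{cl2} ($H^+{=}H^-$). The additional commentary on where the substantive work lies upstream is accurate but not needed for this step.
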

\begin{proof}
    Follows from Corollary \ref{cl1} and Corollary \ref{cl2}.
\end{proof}

\section{Fixed Point Theorem and ultimate diagonal set}
The core construction from the previous section plays a role in proving a variant of the so-called Fixed Point Theorem.\footnote{The origins of this theorem can be traced to \cite[p. 30]{girard}.} This variant essentially demonstrates that the restriction in $\msf{UC}$ requiring $y$ not to be free in $\varphi$ can be circumvented within sufficiently strong (but not necessarily inconsistent) $\msf{NST}$-fragments.

We include a proof of this variant in this paper because, in other presentations\footnote{See, e.g., \cite[p. 382]{petersen} and \cite[p. 174]{weberbook}.}, this version is less explicit, and the proof either employs more advanced logical symbolism or formally relies on set-builder notation. Moreover, we consider this variant to be significant for hypodox research.

We use the following schema of instances of $\msf{UC}$:
\[
    \varphi{\text{-separation }} (\varphi{\text{-}}\msf{separ})\textup{:}\quad\exists y\forall x(x\in y \leftrightarrow \exists v\exists w (x{=}\tuple{v,w}\land \varphi(w,v[v]))) 
\]
Let $A_\varphi$ be a constant such that $\forall x(x\in A_\varphi \leftrightarrow \exists v,w (x{=}\tuple{v,w}\land \varphi(w,v[v])))$.

\begin{Theorem}
    For any formula $\varphi(x,y)$, 
    there exists a formula $\psi(x)$ such that $y$ is not free in $\psi$, and the following equivalence holds in $\msf{BST{+}\{pair,extract,\varphi{\text{-}}\msf{separ}\}}{:}$
    \[
    \exists y \forall x(x\in y\lra \varphi(x,y))\lra \exists y \forall x(x\in y \lra \psi(x)).
    \]
\end{Theorem}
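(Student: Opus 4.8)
The plan is to generalize the construction of $H^+$ from Section \ref{paradox} essentially verbatim, replacing the specific disjunction $w\in w\lor w{=}v[v]$ by the arbitrary formula $\varphi(w,v[v])$. The object playing the role of $H^+$ is the \emph{fixed-point set} $F:=A_\varphi[A_\varphi]$, which exists by $\msf{extract}$ once $A_\varphi$ is supplied by $\varphi{\text{-}}\msf{separ}$. The witnessing formula will be $\psi(x):=\varphi(x,A_\varphi[A_\varphi])$, understood as the expansion of $\varphi$ in which its second argument is replaced, via the $a[a]$ abbreviation, by the extracted set $A_\varphi[A_\varphi]$. Since that abbreviation introduces only fresh bound variables and mentions only the constant $A_\varphi$, the variable $y$ does not occur free in $\psi$, as the theorem requires.

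The heart of the argument is a fixed-point lemma mirroring Lemma \ref{theorem1}: I would establish $\forall x(x\in F \lra \varphi(x,F))$ by unpacking definitions. First, $\msf{extract}$ gives $x\in F \lra \exists z(z{=}\tuple{A_\varphi,x}\land z\in A_\varphi)$. Next, the defining property of $A_\varphi$ rewrites $z\in A_\varphi$ as $\exists v\exists w(z{=}\tuple{v,w}\land \varphi(w,v[v]))$. Substituting $z{=}\tuple{A_\varphi,x}$ and appealing to Kuratowski pair uniqueness — provable from $\msf{pair}$ and $\msf{exten}$ — forces $v{=}A_\varphi$ and $w{=}x$, collapsing the matrix to $\varphi(x,A_\varphi[A_\varphi])$, i.e. to $\varphi(x,F)$. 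Finally, since $\msf{pair}$ guarantees the pair $\tuple{A_\varphi,x}$ exists, the wrapper $\exists z(z{=}\tuple{A_\varphi,x}\land\cdots)$ may be discarded, exactly as in Lemma \ref{theorem1}. This yields the fixed-point equivalence $\forall x(x\in F\lra \varphi(x,F))$.

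With the lemma in hand, both directions of the biconditional follow at once, because in fact each existential statement is outright provable in $\msf{BST}{+}\{\msf{pair},\msf{extract},\varphi{\text{-}}\msf{separ}\}$. The set $F$ witnesses the left-hand comprehension: taking $y:=F$, the fixed-point lemma is precisely $\forall x(x\in F\lra \varphi(x,F))$. The same $F$ witnesses the right-hand comprehension, since $\psi(x)$ was defined to be $\varphi(x,F)$, so again $\forall x(x\in F\lra \psi(x))$. Hence each side of the stated equivalence is a theorem of the system, and the biconditional holds trivially; the genuine content is that the "illegal" self-referential comprehension on the left (in which $y$ is free in $\varphi$) has been reduced to the admissible $\msf{UC}$-instance on the right.

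The main obstacle is the substitution step inside the lemma, which is where the actual self-reference is discharged: one must verify that $\tuple{A_\varphi,x}{=}\tuple{v,w}$ genuinely forces $v{=}A_\varphi$, so that the abbreviation $v[v]$ evaluates to $A_\varphi[A_\varphi]{=}F$ and the defined set reappears inside its own defining condition. A secondary but indispensable piece of bookkeeping is to keep the bound variables concealed within the $a[a]$ and $\tuple{\cdot,\cdot}$ abbreviations disjoint from $y$ (using the variable-conflict convention adopted earlier), since it is exactly this disjointness that certifies $y\notin\mathrm{free}(\psi)$ and thereby makes $\psi$ an admissible instance of $\msf{UC}$.
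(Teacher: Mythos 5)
Your proposal is correct and follows essentially the same route as the paper's proof: construct $A_\varphi$ by $\varphi{\text{-}}\msf{separ}$, extract the fixed point $A_\varphi[A_\varphi]$, and establish $\forall x(x\in A_\varphi[A_\varphi]\lra\varphi(x,A_\varphi[A_\varphi]))$ by unpacking definitions and discarding the pairing wrapper via $\msf{pair}$. The only cosmetic difference is that the paper takes $\psi(x)$ to be $x\in A_\varphi^2$ rather than your $\varphi(x,A_\varphi[A_\varphi])$; these are interchangeable given the fixed-point equivalence, and your added attention to pair injectivity and variable hygiene is a welcome refinement rather than a divergence.
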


\begin{proof}
    By $\varphi{\text{-}}\msf{separ}$, the following set $A_\varphi$ exists:
    \[
    \forall x(x\in A_\varphi \leftrightarrow \exists v\exists w (x{=}\tuple{v,w}\land \varphi(w,v[v])))
    \]
    Let $A_\varphi^2=A_\varphi[A_\varphi]$ by $\msf{extract}$. 
    By unpacking the definitions, we obtain:
    \[
    x\in A_\varphi^2\ \longleftrightarrow\ \exists z( z{=}\tuple{A_\varphi,x} \land z\in A_\varphi )\ \longleftrightarrow\ \exists z( z{=}\tuple{A_\varphi,x} \land \varphi(x,A_\varphi^2) )
    \]
In the rightmost equivalence, we have replaced $A_\varphi[A_\varphi]$ with $A_\varphi^2$ because $A_\varphi^2{=}A_\varphi[A_\varphi]$. This substitution is the core idea.

By $\msf{pair}$, we know that if $y$ and $x$ exist, then $\exists z(z=\tuple{y,x})$ holds. Thus, any sentence of the form $\exists z(z=\tuple{y,x} \land \varphi)$ is equivalent to $\varphi$. The last sentence in the chain of equivalences above is of this form.
Therefore, we conclude that for all $x$:
    \[
    x\in A_\varphi^2\ \longleftrightarrow\ \varphi(x,A_\varphi^2) 
    \]
    We take $x\in A_\varphi^2$ as $\psi$.
\end{proof}

The Fixed Point Theorem can be used to derive contradictions, which have not been previously regarded as self-standing paradoxes. Consider the following set $Z$:
\[
\forall x(x\in Z \leftrightarrow \exists v\exists w (x{=}\tuple{v,w}\land  w\notin v[v]))
\]
Let us call the sentence above $Z{\text{-}}\msf{separ}$.
\begin{Theorem}
    $\msf{BST{+}\{pair,extract},Z{\text{-}}\msf{separ}\}$ is inconsistent.
\end{Theorem}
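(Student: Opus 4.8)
The plan is to invoke the Fixed Point Theorem proved immediately above, specialized to the formula $\varphi(x,y):=x\notin y$. The sentence $Z{\text{-}}\msf{separ}$ is exactly the instance of $\varphi{\text{-}}\msf{separ}$ for this $\varphi$, reading the displayed bound variable $w$ as the first argument and $v[v]$ as the second; in other words, $Z$ plays the role of $A_\varphi$ for $\varphi(x,y)=x\notin y$. So all the ingredients needed to run the fixed-point construction are already on the table, and the theorem reduces to observing that this particular fixed point is a self-contradictory Russell-style set.

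First I would form the set $Z^2:=Z[Z]$ using $\msf{extract}$, exactly as $A_\varphi^2=A_\varphi[A_\varphi]$ is formed in the Fixed Point Theorem. Unpacking the definitions verbatim as in that proof gives, for all $x$,
\[
x\in Z^2\ \longleftrightarrow\ \exists z(z{=}\tuple{Z,x}\land z\in Z)\ \longleftrightarrow\ \exists z(z{=}\tuple{Z,x}\land x\notin Z^2),
\]
where the rightmost step replaces $Z[Z]$ by $Z^2$ using $Z^2{=}Z[Z]$. By $\msf{pair}$ the conjunct $\exists z(z{=}\tuple{Z,x})$ is always satisfied and may be discharged, so the chain collapses to the self-referential equivalence
\[
x\in Z^2\ \longleftrightarrow\ x\notin Z^2.
\]

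Finally I would instantiate $x:=Z^2$, obtaining $Z^2\in Z^2\leftrightarrow Z^2\notin Z^2$, which is an outright contradiction and therefore witnesses $\bot$. Intuitively, the fixed-point machinery promotes the diagonal set $Z$ into a set $Z^2$ that is its own Russell set. I do not anticipate any genuine obstacle, since every step follows the template of the Fixed Point Theorem's proof; the only point requiring care is the bookkeeping that matches $Z{\text{-}}\msf{separ}$ to the $\varphi{\text{-}}\msf{separ}$ schema and confirms that the negated membership $w\notin v[v]$ is precisely what turns the fixed point into the paradoxical equivalence $x\in Z^2\leftrightarrow x\notin Z^2$.
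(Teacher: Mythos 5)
Your proposal is correct and follows essentially the same route as the paper: form $Z^2=Z[Z]$ via $\msf{extract}$, unpack the definitions and discharge the pairing conjunct via $\msf{pair}$ exactly as in the Fixed Point Theorem's proof to obtain $x\in Z^2\leftrightarrow x\notin Z^2$ for all $x$, and then read off the contradiction. Your explicit instantiation at $x:=Z^2$ just spells out the step the paper leaves as ``the contradiction follows immediately.''
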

\begin{proof}
    Let $Z^2=Z[Z]$ by $\msf{extract}$. By unpacking the definitions and using $\msf{pair}$ (as in the proof the Fixed Point Theorem), we find that for all $x$:
\[
x\in Z^2\ \longleftrightarrow\  x \notin Z^2 
\]
Thus, $Z^2$ contains exactly those sets that $Z^2$ does not contain. This is a limiting case\footnote{There are contexts in which sets like $Z^2$ can be useful; see, e.g., \citep{weber}.} of diagonal definitions, and the contradiction follows immediately.
\end{proof}

\section{Final remarks}

In this paper, we explored the concept of a hypodox in the hope that it could serve as a useful tool in formal contexts. We found that, although the concept of a set-theoretic paradox can be relatively well-defined in formal settings, the concept of a~set-theoretic hypodox lacks similar clarity, at least for now.

However, we did uncover a distinctive property of the co-Russell set—the primary example of a hypodoxical set—that sets it apart from other sets. Specifically, we demonstrated that a contradiction can be derived through the Fixed Point Theorem in $\msf{NST}$, leveraging the unique properties of the co-Russell set.

Could this property serve as the basis for characterizing set-theoretic hypodoxes? The answer depends on the criteria we impose on such a collection. If we use this property as a defining feature, it would exclude at least some complements of paradoxical sets other than the Russell set. 

Consider, for instance, the paradoxical Curry-like set from the proof of Theorem \ref{thmpara}. Its complement would be the set $D = \{x : x\in x \land \psi\}$, where $\psi$ asserts the existence of the universal set. In $\msf{BST}$, the self-membership of $D$ is independent, similar to the co-Russell set. However, unlike the co-Russell set, there is no issue with having the following two sets within the same model:
\[
\begin{aligned}
    D^+ :=&\quad \{x:(x\in x \land \psi)\lor x=D^+\}\\
    D^- :=&\quad \{x:(x\in x \land \psi)\land x\neq D^-\}
\end{aligned}
\]
If there is no universal set in the model (and thus $\psi$ does not hold), then $D^-$ collapses to $\emptyset$, and $D^+$ collapses to a set containing only $D^+$.

This suggests that either the correct distinguishing property for set-theoretic hypodoxes has yet to be identified, or the paradox-hypodox duality begins to break down in certain cases.

\bibliography{bibliography}

\end{document}